\theoremstyle{plain}
\newtheorem{theorem}{Theorem}
\newtheorem{lemma}[theorem]{Lemma}
\newtheorem{proposition}[theorem]{Proposition}
\newtheorem{corollary}[theorem]{Corollary}
\theoremstyle{definition}
\newtheorem{definition}[theorem]{Definition}
\newtheorem{remark}[theorem]{Remark}
\numberwithin{theorem}{subsection}
\numberwithin{equation}{section}
\newcommand{\B}{\mathbb{B}}
\newcommand{\C}{\mathbb{C}}
\newcommand{\HH}{\mathbb{H}}
\newcommand{\Sbb}{\mathbb{S}}
\newcommand{\Mbb}{\mathbb{M}}
\newcommand{\R}{\mathbb{R}}
\newcommand{\Z}{\mathbb{Z}}
\newcommand{\Spe}{\mathrm{Sp}}
\newcommand{\SO}{\mathrm{SO}}
\newcommand{\SU}{\mathrm{SU}}
\newcommand{\OO}{\mathrm{O}}
\newcommand{\GL}{\mathrm{GL}}
\newcommand{\spe}{\mathfrak{sp}}
\newcommand{\frakg}{\mathfrak{g}}
\newcommand{\frakk}{\mathfrak{k}}
\newcommand{\frakm}{\mathfrak{m}}
\newcommand{\frakl}{\mathfrak{l}}
\newcommand{\sgn}{\mathrm{sgn}}
\newcommand{\im}{\operatorname{Im}}
\newcommand{\re}{\operatorname{Re}}
\newcommand{\diag}{\operatorname{diag}}
\newcommand{\Ad}{\operatorname{Ad}}
\newcommand{\cF}{\mathcal{F}}
\newcommand{\cM}{\mathcal{M}}
\begin{document}

\title[Lie theory of slice geometry]{Lie theory of the slice Riemannian geometry on the quaternionic unit ball}

\author{Raul Quiroga-Barranco}
\address{Centro de Investigaci\'on en Matem\'aticas, Guanajuato, M\'exico}
\email{quiroga@cimat.mx}

\keywords{Slice regularity, quaternions, M\"obius transformations, Lie groups, Riemannian geometry}

\subjclass[2020]{Primary 30G35; Secondary 22F50}

\maketitle


\begin{abstract}
	The quaternionic unit ball carries a Riemannian metric built using regular M\"obius transformations: the slice Riemannian metric. We prove that the geometry induced by this metric is strongly related to the group $\mathrm{Sp}(1,1)$. We also develop the foundations for a Lie theoretic study of the slice Riemannian metric. In particular, we compute its isometry group and prove that it is built from symmetries of the Lie group $\mathrm{Sp}(1,1)$. We also compare the slice Riemannian geometry with the quaternionic Poincar\'e geometry, where the latter is considered within the setup of Riemannian symmetric spaces.
\end{abstract}

\section{Introduction}
\label{sec:intro}
An outstanding feature of hypercomplex analysis is its ability to bring out interesting properties in seemingly already understood mathematical objects. This has been particularly so with the function theory associated to the notion of slice regularity, also called Cullen regularity. The research in this area is quite large and varied. We refer to \cite{ColomboSabadiniStruppaFunctionalBook,GentiliStoppatoStruppa2ndEd} 
for the fundamentals of slice regular functions of one variable.
In this work we will focus in the interplay between slice regular functions and Riemannian geometry on the quaternionic unit ball $\B$. 

From the classical point of view, $\B$ carries a Riemannian structure whose isometries are (essential all) given by classical M\"obius transformations: this is the so called quaternionic hyperbolic metric on $\B$ also known as the quaternionic Poincar\'e metric. This leads to consider the quaternionic symplectic Lie group $\Spe(1,1)$. The fundamental properties of the corresponding geometry are very well known (see~\cite{BisiGentiliMobius,BisiStoppatoMobius}) and is a subset of the theory of Riemannian symmetric spaces (see~\cite{Helgason}).

On the other hand, as consequence of the development of slice regular function theory, the notion of regular M\"obius transformations on $\B$ has been introduced (see~\cite{GentiliStoppatoStruppa2ndEd}). One would expect this to yield new Riemannian structures on $\B$. This was proved to be the case in \cite{QB-SliceKahler}, where the notion of slice Riemannian metric on $\B$ was defined. More precisely, there is a metric on $\B$ that can be built from regular M\"obius transformations and that behaves nicely with respect to them. Furthermore, in \cite{QB-SliceKahler} it was proved that the slice Riemannian metric coincides with the Riemannian metric built in \cite{ArcozziSarfatti} from a suitable quaternionic Hardy space defined in $\B$.

The (classical) quaternionic Poincar\'e metric on $\B$ may be studied through the group $\Spe(1,1)$. This is a particular case of the study of Riemannian symmetric spaces through Lie theoretic techniques. Hence, it is natural to ask whether a similar study may be performed for the slice Riemannian metric in $\B$. The main contribution of this paper is to lay the foundations of such study. More precisely, we prove that the Lie theoretic properties of $\Spe(1,1)$ are tightly related to the geometric properties of the slice Riemannian metric through the use of slice regular function theory.

To achieve our goal, we recall in Section~\ref{sec:RiemSymmB} the basic properties, geometric and Lie theoretic, of the quaternionic Poincar\'e geometry on $\B$. We start by reviewing some general notions of Riemannian symmetric spaces in subsection~\ref{subsec:RiemSymmSpaces} which we then specialize to $\B$ in subsection~\ref{subsec:RiemSymmB}.

The geometry and Lie theory of the slice Riemannian metric of $\B$ is studied in Section~\ref{sec:SliceRiemB}, where the main results can be found. Subsection~\ref{subsec:sliceregular} recalls the basics of regular function theory, including the notion of regular M\"obius transformation. The definition of the slice Riemannian metric on $\B$ is given in subsection~\ref{subsec:sliceRiem}, where its isometry group is computed in Theorem~\ref{thm:Iso(B,g)}. The Lie theory associated to the slice Riemannian metric on $\B$ is developed in subsection~\ref{subsec:sliceRiemLie}. One of our main tools is Theorem~\ref{thm:SliceRegularBasQuotient}, which is originally found in \cite{QB-cM(B)-asmanifold} and that realizes $\B$ as a double coset quotient of the group $\Spe(1,1)$; such realization makes use of regular M\"obius transformations. We prove in Theorem~\ref{thm:Iso(B,g)fromSp(1,1)} that an index $2$ subgroup (the orientation preserving subgroup) of the isometry group for the slice Riemannian metric can be described in terms of left and right translations in $\Spe(1,1)$. In other words, the symmetries of the slice Riemannian metric are given by the symmetries of $\Spe(1,1)$. We construct in Theorem~\ref{thm:G=K1BK2-fromsliceregular} a decomposition of $\Spe(1,1)$, up to diffeomorphism, that parallels the one classically obtained for Riemannian symmetric spaces. Such decomposition of $\Spe(1,1)$ is interesting by itself and may be considered as a useful application of hypercomplex analysis to Lie theory.

Finally, we present in subsection~\ref{subsec:SymmVSSlice} a detailed comparison of the Lie theory associated to the quaternionic Poincar\'e geometry and the slice Riemannian geometry. The properties obtained here are consequence of the theory developed in previous subsections. We observe some interesting similarities between both cases that might be considered unexpected. At the same time, some strong differences are noted. For example, the isometry group of the quaternionic Poincar\'e metric acts transitively, while the orbits of the isometry group for the slice Riemannian metric are (real) $3$-dimensional with the exception of a $1$-dimensional orbit: $(-1,1) = \R\cap\B$.

\section{Riemannian symmetric space structure of $\B$}
\label{sec:RiemSymmB}
\subsection{Riemannian symmetric spaces}\label{subsec:RiemSymmSpaces}
We recall some of the basic notions related to Riemannian symmetric spaces. Our main references will be \cite{Helgason}, with particular emphasis in Chapter~IV. Nevertheless, we will provide references below when more details are called for.

\begin{definition}\label{def:RiemSymmSpace}
	A Riemannian symmetric space is a connected Riemannian manifold $M$ so that, for every $p \in M$, there exists an isometry of $M$ that has $p$ as unique fixed point in a neighborhood of $p$.
\end{definition}

We have the following characterization of Riemannian symmetric spaces which highlights two of their most important features: homogeneity and completeness. 

\begin{proposition}\label{prop:symmhomogeneous}
	Let $M$ be a connected Riemannian manifold. Then, $M$ is a Riemannian symmetric space if and only if it is homogeneous, as a Riemannian manifold, and for some $p_0 \in M$ there is an isometry of $M$ that has $p_0$ as an isolated fixed point. Furthermore, in this case $M$ is a complete Riemannian manifold, i.e.~every geodesic is defined in the whole real line $\R$.
\end{proposition}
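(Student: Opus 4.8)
The plan is to establish the two implications separately and then deduce completeness, following the classical development of Riemannian symmetric spaces (see \cite[Ch.~IV]{Helgason}). The implication from ``homogeneous, together with a single point admitting an isometry having it as an isolated fixed point'' to ``symmetric'' is the easier one, and I would prove it by conjugation: if $M$ is homogeneous and $f$ is an isometry of $M$ for which $p_0$ is an isolated fixed point, then for an arbitrary $p\in M$ choose, by homogeneity, an isometry $g$ of $M$ with $g(p_0)=p$; the isometry $gfg^{-1}$ fixes $p$, and since $g$ is a diffeomorphism it carries a neighborhood of $p_0$ on which $p_0$ is the only fixed point of $f$ to a neighborhood of $p$ on which $p$ is the only fixed point of $gfg^{-1}$. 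Thus every point of $M$ admits the desired isometry and $M$ is a Riemannian symmetric space.

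For the converse, the condition at $p_0$ is immediate from Definition~\ref{def:RiemSymmSpace}, so the substantive point is homogeneity. I would first recall that the isometry $s_p$ supplied by the definition can be taken to be the geodesic symmetry at $p$: its differential at $p$ is orthogonal with no eigenvalue $1$ (that is what ``isolated fixed point'' forces), and passing to an involutive such isometry gives $d(s_p)_p=-\mathrm{id}_{T_pM}$, whence $s_p(\exp_p v)=\exp_p(-v)$ on a normal neighborhood of $p$ (cf.\ \cite[Ch.~IV]{Helgason}). Granting this, I would use the transvection trick: declare $x\sim y$ when some isometry of $M$ sends $x$ to $y$; this is an equivalence relation, and each class is open because for $y$ in a geodesically convex normal ball around $x$, with $\gamma\colon[0,1]\to M$ the connecting geodesic and $m=\gamma(1/2)$ its midpoint, the isometry $s_m\circ s_x$ satisfies $(s_m\circ s_x)(x)=s_m(x)=y$, since the geodesic symmetry at the midpoint of $\gamma$ swaps its endpoints. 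As $M$ is connected and is partitioned into these open classes, there is only one, so the isometry group acts transitively and $M$ is homogeneous.

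For completeness I would argue directly from homogeneity, which holds under either characterization. Fix $p_0$; the exponential map $\exp_{p_0}$ is defined on a neighborhood of the origin of $T_{p_0}M$, hence on some $\delta$-ball, and transporting this by isometries shows $\exp_p$ is defined on the $\delta$-ball of $T_pM$ for every $p$. Consequently every unit-speed geodesic is defined on an open interval of length $2\delta$ about each of its points, so a maximal geodesic $\gamma\colon[0,a)\to M$ with $a<\infty$ could be prolonged past $a$ by re-solving the geodesic equation with initial data at $\gamma(a-\delta/2)$ --- a contradiction. Hence every geodesic is defined on all of $\R$, i.e.\ $M$ is complete (equivalently, metrically complete, by Hopf--Rinow). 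I expect the homogeneity step to be the main obstacle: one must ensure that the symmetries can genuinely be used as geodesic symmetries, so that the midpoint transvection $s_m\circ s_x$ really carries $x$ to $y$; the conjugation argument, the composition of isometries, and the geodesic-continuation argument are all routine.
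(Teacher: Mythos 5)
Your sufficiency argument is exactly the paper's: conjugate the isometry $f_0$ fixing $p_0$ by an isometry carrying $p_0$ to an arbitrary $p$. For the converse and for completeness the paper simply cites \cite[Chapter~IV]{Helgason}, so your write-up is genuinely more self-contained there, and the route you take is a legitimate variant of Helgason's: he first proves completeness (extending a geodesic past a finite endpoint by applying the symmetry at a nearby point of the geodesic) and then obtains homogeneity from Hopf--Rinow together with the midpoint symmetry, whereas you obtain homogeneity first, by the openness-of-equivalence-classes argument inside a convex normal ball, and then deduce completeness from homogeneity via a uniform lower bound on the domain of $\exp_p$. Those two arguments, and the geodesic-continuation step, are correct as you state them. (Minor remark: $s_m$ alone already carries $x$ to $y$; composing with $s_x$ is not needed for the openness claim.)

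The one step that is not justified is the one you yourself flag as the main obstacle: ``passing to an involutive such isometry gives $d(s_p)_p=-\mathrm{id}$.'' Under Definition~\ref{def:RiemSymmSpace} the hypothesis only provides, at each $p$, an isometry whose differential at $p$ is orthogonal with no nonzero fixed vector; that is far from forcing $-\mathrm{id}$ (a rotation of the Euclidean plane by $120^{\circ}$ has the origin as an isolated fixed point, and the cyclic group it generates does not contain $-\mathrm{id}$), and there is no evident procedure for manufacturing an involutive isometry out of a non-involutive one. Your midpoint argument genuinely needs $d(s_m)_m=-\mathrm{id}$, since only then does $s_m(\exp_m w)=\exp_m(-w)$ swap the endpoints of the connecting geodesic. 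This is precisely the point at which the paper leans on Helgason, whose definition of a globally symmetric space builds involutivity in from the start; it is also the content of Proposition~\ref{prop:involutivesymmetry}, which the paper again attributes to Helgason rather than proving. So your proof is complete modulo exactly the input the paper imports by citation; to close it on its own terms you would either have to adopt the involutive form of the definition or supply an argument that the geodesic symmetry at each point is in fact an isometry.
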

\begin{proof}
	To prove sufficiency note that, if $p_0 \in M$ and $f_0$ is an isometry that fixes $p_0$ as in the statement, then homogeneity implies that for every $p \in M$ there exists an isometry $f$ of $M$ mapping $f(p_0) = p$. It is now easy to prove that $f \circ f_0 \circ f^{-1}$ is an isometry that has $p$ as an isolated fixed point. The necessity follows from the discussion in \cite[Chapter~IV,~Section~3]{Helgason}. The last claim is established in the proof of \cite[Chapter~IV,~Lemma~3.1]{Helgason}.
\end{proof}

The isometries considered in the definition of Riemannian symmetric space can be geometrically described in a way that allows us to see their uniqueness. This is the content of the next result, which is a consequence of \cite[Chapter~IV,~Lemma~3.1]{Helgason}.

\begin{proposition}\label{prop:involutivesymmetry}
	Let $M$ be a Riemannian symmetric space. If $f$ is an isometry of $M$ with $p$ as isolated fixed point, then it reverses the parametrization of geodesics through $p$. More precisely, if $\gamma : \R \rightarrow M$ is a geodesic in $M$ with $\gamma(0) = p$, then $f(\gamma(t)) = \gamma(-t)$ for all $t \in \R$. In particular, for every $p \in M$ there is a unique isometry $f$ with $p$ as isolated fixed point and it is involutive: $f^2 = id_M$.
\end{proposition}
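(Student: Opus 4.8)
The plan is to reduce everything to identifying the differential $A := df_p\colon T_pM\to T_pM$; this is an orthogonal linear map since $f$ is an isometry fixing $p$, and it is an involution, $A^2 = \mathrm{id}$, since the symmetry $f$ is. The one geometric fact I would use throughout is that an isometry carries geodesics to geodesics, whence $f$ intertwines the exponential map with $A$: one has $f\big(\exp_p(X)\big) = \exp_p(AX)$ whenever $\exp_p(X)$ is defined, and since $M$ is complete by Proposition~\ref{prop:symmhomogeneous} this holds for every $X \in T_pM$ and every geodesic from $p$ is defined on all of $\R$.

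The crucial step is to show $A = -\mathrm{id}_{T_pM}$. Choose a ball $B_r(0) \subset T_pM$ on which $\exp_p$ is injective; because $A$ preserves norms it maps $B_r(0)$ into itself, so for $X \in B_r(0)$ the point $\exp_p(X)$ is fixed by $f$ exactly when $AX = X$. Hence the fixed-point set of $f$ inside the normal neighborhood $\exp_p(B_r(0))$ is $\exp_p\big(\ker(A - \mathrm{id}) \cap B_r(0)\big)$, and since $p$ is isolated in $\mathrm{Fix}(f)$ we must have $\ker(A - \mathrm{id}) = 0$, i.e.\ the eigenvalue $1$ does not occur for $A$. As $A$ is an orthogonal involution it is diagonalizable with eigenvalues in $\{\pm 1\}$, so $A = -\mathrm{id}$. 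The reversal of geodesics will then be immediate: for $\gamma(t) = \exp_p\big(t\,\dot\gamma(0)\big)$ we get $f(\gamma(t)) = \exp_p\big(-t\,\dot\gamma(0)\big) = \gamma(-t)$, valid for all $t \in \R$ by completeness.

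For uniqueness (and, if one does not build it into the hypothesis, involutivity) I would invoke the rigidity fact that an isometry of a connected Riemannian manifold is determined by its value and differential at a single point. If $g$ is another symmetry at $p$, then by the previous step $dg_p = -\mathrm{id} = df_p$ and $g(p) = p = f(p)$, so $g = f$; and since $f^2$ fixes $p$ with $d(f^2)_p = A^2 = \mathrm{id}$, the same principle gives $f^2 = \mathrm{id}_M$ at once. I expect the step requiring the most care to be the identification of $\mathrm{Fix}(f)$ near $p$ with $\exp_p(\mathrm{Fix}(A))$ — it rests on staying inside a normal neighborhood and on $A$ being norm-preserving — and it is here that one sees the hypotheses cannot be relaxed: a nontrivial rotation of Euclidean space about the origin is an isometry with an isolated fixed point but does not reverse geodesics, so the symmetry's involutivity is genuinely needed.
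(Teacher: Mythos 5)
Your argument is the standard proof of \cite[Chapter~IV,~Lemma~3.1]{Helgason} --- which is all the paper itself offers here, since it simply cites that lemma --- and each individual step is sound: the intertwining $f(\exp_p X)=\exp_p(AX)$, the identification of the fixed-point set in a normal neighborhood with $\exp_p\big(\ker(A-\mathrm{id})\cap B_r(0)\big)$, and the rigidity principle that an isometry of a connected manifold is determined by its value and differential at one point. But as a proof of the proposition \emph{as stated} it is circular at the decisive moment. You take $A^2=\mathrm{id}$ as given ``since the symmetry $f$ is'' an involution, and you genuinely need this to pass from ``$1$ is not an eigenvalue of the orthogonal map $A$'' to ``$A=-\mathrm{id}$.'' Yet the proposition's hypothesis is only that $f$ is an isometry with $p$ as an isolated fixed point; involutivity appears in the statement solely as a \emph{conclusion} (``\dots and it is involutive: $f^2 = id_M$''). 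You have therefore assumed part of what is to be proved.

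This gap cannot be closed, because the proposition as printed is false, and your own closing example shows it. A rotation of $\R^2$ about the origin through an angle other than $\pi$ --- or, inside the paper's main example, the isotropy isometry $q\mapsto qi$ of $(\B,\widehat{g})$, which fixes only $0$ --- is an isometry with an isolated fixed point that neither reverses geodesics nor is involutive; the same examples destroy the uniqueness claim. The source of the trouble is Definition~\ref{def:RiemSymmSpace}, which omits the involutivity built into Helgason's definition of a globally symmetric space; the cited Lemma~3.1 concerns the involutive symmetry $s_p$, not an arbitrary isometry with an isolated fixed point, and what you have correctly proved is that statement. You clearly saw the issue --- you remark that ``the symmetry's involutivity is genuinely needed'' --- but that observation should have been promoted from an aside about hypotheses to the explicit conclusion that the proposition requires $f$ to be assumed involutive (or Definition~\ref{def:RiemSymmSpace} to be amended) before any proof can succeed.
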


It is well known that the isometry group of an arbitrary Riemannian manifold has a natural Lie group structure, compatible with the compact-open topology, such that its action on the Riemannian manifold is smooth (see \cite[Chapter~II]{KobayashiTransformations}). For a Riemannian symmetric space this Lie group of isometries has further properties, many of them very much related to the geometry of the space. We now proceed to describe those that will be relevant to this work. First, we recall some basic Lie theory notions.

\begin{remark}\label{rmk:exp}
	It will be useful to consider the exponential map for Lie groups. We describe here the definition and most basic properties, while referring to \cite{WarnerBook} for further details. Let $G$ be a Lie group with Lie algebra $\frakg$. We observe that $\frakg$ is naturally identified with the tangent space of $G$ at the identity element. A one-parameter subgroup of $G$ is a smooth map $\gamma : \R \rightarrow G$ that satisfies $\gamma(s + t) = \gamma(s) \gamma(t)$, for every $s,t \in \R$; in other words, $\gamma$ is a smooth homomorphism of Lie groups. For every $X \in \frakg$, there exists a unique one-parameter subgroup $\gamma_X : \R \rightarrow G$ such that $\gamma_X'(0) = X$. The exponential map of $G$ is the map $\exp : \frakg \rightarrow G$ given by $\exp(X) = \gamma_X(1)$, for every $X \in \frakg$. It follows that $\exp$ is smooth and satisfies $\exp(tX) = \gamma_X(t)$, for every $X \in \frakg$ and $t \in \R$. Furthermore, this last identity characterizes the exponential map.
\end{remark}

Most of the Lie theory properties of a Riemannian symmetric space are derived from the facts stated in the next result. We refer to \cite[Chapter~IV,~Theorem~3.3]{Helgason} for the proof and further details. Note that, for convenience, we will sometimes consider isometric Lie group actions on the right. 

\begin{proposition}\label{prop:RiemSymmProperties}
	Let $M$ be a Riemannian symmetric space and $G = I_0(M)$ be the identity component of the Lie group of isometries of $M$. For a given point $p_0 \in M$, let us denote by $K$ the isotropy in $G$ of $p_0$ (the subgroup of $G$ consisting of elements that fix $p_0$). Then, the following properties hold.
	\begin{enumerate}
		\item The group $K$ is compact and the map $K\backslash G \rightarrow M$ given by $Kg \mapsto p_0\cdot g$ is a $G$-equivariant diffeomorphism.
		
		\item Let $f_0$ be an isometry of $M$ that has $p_0$ as isolated fixed point. Then, the map $\sigma : G \rightarrow G$ given by $g \mapsto f_0 g f_0$ is an involutive isomorphism of Lie groups. Furthermore, $K$ lies between the fixed point subgroup $K_\sigma$ of $\sigma$ and the identity component of $K_\sigma$.
		
		\item Let us denote by $\frakg$ and $\frakk$ the Lie algebras of $G$ and $K$, respectively. For $\dif \sigma_I : \frakg \rightarrow \frakg$ the differential of $\sigma$ at the identity element $I \in G$ we have $\frakk = \{ X \in \frakg : \dif \sigma_I(X) = X \}$. If we consider $\frakm = \{ X \in \frakg : \dif \sigma_I(X) = -X \}$, then we also have $\frakg =  \frakk \oplus \frakm$.
		
		\item With the previous notation, if $\pi : G \mapsto M$ denotes the orbit map $g \mapsto p_0 \cdot g$, then the differential $\dif \pi_I : \frakg \rightarrow T_{p_0} M$ satisfies the following properties.
		\begin{enumerate}
			\item The map $\dif \pi_I$ has kernel $\frakk$ and maps $\frakm$ isomorphically onto $T_{p_0} M$.
			\item The geodesic through $p_0$ with initial velocity $\dif \pi_I(X)$, where $X \in \frakm$, is given by
			\[
				t \mapsto p_0\cdot \exp(tX),
			\]
		\end{enumerate}
		In particular, this describes all geodesics of $M$ through $p_0$.
	\end{enumerate}
\end{proposition}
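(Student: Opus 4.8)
The plan is to establish the four items in order, drawing on the standard theory of homogeneous Riemannian manifolds together with the geometric description of the geodesic symmetry from Proposition~\ref{prop:involutivesymmetry}. Throughout I would fix an isometry $f_0$ having $p_0$ as an isolated fixed point; by Proposition~\ref{prop:involutivesymmetry} it is involutive, reverses every geodesic through $p_0$, and therefore has differential $\dif(f_0)_{p_0} = -\operatorname{id}_{T_{p_0}M}$. I would also use that, by Proposition~\ref{prop:symmhomogeneous}, $G = I_0(M)$ already acts transitively on $M$, together with the classical fact that the point stabilizers of the full isometry group of a Riemannian manifold are compact (see \cite{KobayashiTransformations}). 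Granting these, part (1) follows quickly: $K$ is the intersection of $G$ with the stabilizer of $p_0$ in the full isometry group, hence a closed subgroup of a compact group, hence compact; moreover $K$ is closed in $G$, so $K\backslash G$ is a manifold and the orbit map $g \mapsto p_0\cdot g$ descends to a smooth $G$-equivariant bijection $K\backslash G \to M$, which is a diffeomorphism by the equivariant-rank theorem. In particular $\dim M = \dim G - \dim K$, which I would keep in mind for part (4).

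For (2) and (3): conjugation by $f_0$ is an automorphism of the full isometry group and so preserves its identity component, so $\sigma(g) = f_0 g f_0$ defines an automorphism of $G$, involutive since $f_0^2 = \operatorname{id}$. If $k \in K$ then $\sigma(k)$ also fixes $p_0$ and $\dif(\sigma(k))_{p_0} = (-\operatorname{id})\,\dif k_{p_0}\,(-\operatorname{id}) = \dif k_{p_0}$, so $\sigma(k)$ and $k$ have the same $1$-jet at $p_0$; since an isometry of a connected Riemannian manifold is determined by its value and differential at one point, $\sigma(k) = k$, proving $K \subseteq K_\sigma$. Conversely, if $g \in K_\sigma$ then $f_0$ commutes with the action of $g$, so $p_0 \cdot g$ is a fixed point of $f_0$; restricting the continuous map $g \mapsto p_0 \cdot g$ to the connected group $(K_\sigma)_0$, it takes values in the fixed-point set of $f_0$ and equals $p_0$ at the identity, whence, $p_0$ being isolated among those fixed points, it is identically $p_0$; thus $(K_\sigma)_0 \subseteq K \subseteq K_\sigma$, which is (2). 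For (3), $\dif\sigma_I$ is then an involution of $\frakg$, so $\frakg$ decomposes as the direct sum of its $(+1)$- and $(-1)$-eigenspaces; the $(+1)$-eigenspace is the Lie algebra of $K_\sigma$, which coincides with $\frakk$ by (2), and I would take $\frakm$ to be the $(-1)$-eigenspace.

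For (4): part (a) is immediate, since $\pi$ factors through $K\backslash G$, giving $\frakk \subseteq \ker\dif\pi_I$, while $\dif\pi_I$ has rank $\dim M = \dim G - \dim K = \dim\frakm$, so the kernel is exactly $\frakk$ and $\dif\pi_I$ restricts to an isomorphism $\frakm \to T_{p_0}M$. For part (b), fix $X \in \frakm$ and set $\gamma(t) = p_0 \cdot \exp(tX)$. From $\dif\sigma_I(X) = -X$ I get $\sigma(\exp tX) = \exp(-tX)$, and combining this with the identity $f_0(p_0 \cdot g) = p_0 \cdot \sigma(g)$ (valid because $f_0$ fixes $p_0$) gives $f_0(\gamma(t)) = \gamma(-t)$, as well as $f_0 \circ \phi_t \circ f_0 = \phi_{-t}$, where $\phi_t$ denotes the isometry $q \mapsto q \cdot \exp(tX)$. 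Let $V$ be the Killing field generating the flow $\{\phi_t\}$; then $\gamma$ is its integral curve through $p_0$, so $\nabla_{\dot\gamma}\dot\gamma = (\nabla_V V)\circ\gamma$. The relation $f_0\circ\phi_t\circ f_0 = \phi_{-t}$ says $(f_0)_* V = -V$, so, $f_0$ being an isometry, it carries $\nabla_V V$ to $\nabla_{-V}(-V) = \nabla_V V$; evaluating this invariance at the fixed point $p_0$ and using $\dif(f_0)_{p_0} = -\operatorname{id}$ forces $(\nabla_V V)_{p_0} = 0$. Finally, each $\phi_{t_0}$ is an isometry sending $p_0$ to $\gamma(t_0)$ and carrying $\gamma$, with its velocity field, to the reparametrization $u \mapsto \gamma(u + t_0)$ (because $\exp(uX)$ and $\exp(t_0 X)$ commute), so transporting $(\nabla_V V)_{p_0} = 0$ by $\phi_{t_0}$ shows $(\nabla_{\dot\gamma}\dot\gamma)(t_0) = 0$ for every $t_0$. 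Hence $\gamma$ is a geodesic with $\gamma(0) = p_0$ and $\gamma'(0) = \dif\pi_I(X)$, and since $\dif\pi_I$ maps $\frakm$ onto $T_{p_0}M$ these exhaust the geodesics through $p_0$.

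The step I expect to be the main obstacle is part (4)(b), in particular the deduction that $(\nabla_V V)_{p_0} = 0$: it requires handling with care the interplay between the Killing field $V$, the symmetry $f_0$, and the Levi-Civita connection, and it is essentially the only place where the symmetric-space hypothesis enters geometrically rather than formally. Everything else reduces to homogeneity, the formal properties of the involution $\sigma$, and the eigenspace decomposition $\frakg = \frakk \oplus \frakm$.
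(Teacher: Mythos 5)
Your argument is correct, but there is nothing in the paper to compare it against in detail: the paper offers no proof of this proposition at all, stating it as a known fact and referring to \cite[Chapter~IV,~Theorem~3.3]{Helgason} (with Proposition~\ref{prop:symmhomogeneous} and \cite{KobayashiTransformations} covering the transitivity and compactness inputs). Your self-contained write-up follows the standard lines for (1)--(3) and (4)(a): compactness of the isotropy via the linear isotropy representation, the equivariant-rank argument for $K\backslash G\simeq M$, the $1$-jet rigidity of isometries to get $K\subseteq K_\sigma$, the connectedness argument through the fixed-point set of $f_0$ to get $(K_\sigma)_0\subseteq K$, and the $\pm1$-eigenspace decomposition. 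Where you genuinely diverge from Helgason's proof is (4)(b): he produces the geodesics as orbits of transvections, writing the one-parameter group as a product of two geodesic symmetries $s_{\gamma(t/2)}\circ s_{\gamma(0)}$ and identifying its orbit with the geodesic, whereas you run the Killing-field computation, deducing $(f_0)_*V=-V$ from $\sigma(\exp tX)=\exp(-tX)$, forcing $(\nabla_VV)_{p_0}=0$ via $\dif(f_0)_{p_0}=-\operatorname{id}$, and then propagating this along the orbit by the flow isometries $\phi_{t_0}$. Both routes use the symmetric-space hypothesis only through the geodesic symmetry at $p_0$; yours trades Helgason's synthetic construction of transvections for a short covariant-derivative calculation, and correctly identifies that calculation as the one non-formal step. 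I see no gap.
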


\begin{remark}\label{rmk:transvections}
	With the notation of Proposition~\ref{prop:RiemSymmProperties}, for every $X \in \frakm$, the isometries defined by $\exp(tX)$ ($t \in \R$) are called transvections. Hence, we can rephrase Proposition~\ref{prop:RiemSymmProperties}(4) saying that the geodesics through $p_0$ are orbits of transvections considered as one-parameter subgroups. We refer to \cite[p.209]{Helgason} for further details.
\end{remark}

From now on, we will consider Riemannian symmetric spaces of non-compact type, i.e.~those for which the identity component of the isometry group is non-compact semisimple without (local) compact factors. We refer to \cite[Chapter~VI]{Helgason} for further details and definitions.
In this case, a Riemannian symmetric space induces a decomposition of the identity component of its isometry group in terms of the space and a compact group. This fact is stated in the next result, which is consequence of \cite[Chapter~VI,~Theorem~1.1]{Helgason}.

\begin{proposition}\label{prop:G=KM}
	With the notation of Proposition~\ref{prop:RiemSymmProperties}, if $M$ is a Riemannian symmetric space of non-compact type, then the map $K \times \frakm \rightarrow G$, given by 
	\[
		(k,X) \mapsto k\exp(X),
	\]
	is a diffeomorphism.
\end{proposition}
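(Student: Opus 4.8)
The plan is to reduce the statement to the fact that the map $\Phi\colon\frakm\to M$, $X\mapsto p_0\cdot\exp(X)$, is a diffeomorphism, and then to write down an explicit inverse for $(k,X)\mapsto k\exp(X)$. First I would recall from Proposition~\ref{prop:RiemSymmProperties}(4) that $\dif\pi_I$ restricts to a linear isomorphism $\frakm\xrightarrow{\ \sim\ }T_{p_0}M$ and that $t\mapsto p_0\cdot\exp(tX)$ is the geodesic through $p_0$ with initial velocity $\dif\pi_I(X)$; evaluating at $t=1$ this says exactly that $\Phi=\exp_{p_0}\circ(\dif\pi_I|_\frakm)$, where $\exp_{p_0}\colon T_{p_0}M\to M$ is the Riemannian exponential map (well defined on all of $T_{p_0}M$ by the completeness established in Proposition~\ref{prop:symmhomogeneous}). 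Since $M$ is of non-compact type it has non-positive sectional curvature, so by the Cartan--Hadamard theorem $\exp_{p_0}$ is a diffeomorphism; hence so is $\Phi$. Alternatively, this global-diffeomorphism fact may be quoted directly from \cite[Chapter~VI,~Theorem~1.1]{Helgason}.

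Next I would bring in the orbit map $\pi\colon G\to M$, $g\mapsto p_0\cdot g$, which by Proposition~\ref{prop:RiemSymmProperties}(1) is a smooth submersion realizing $M=K\backslash G$, together with the candidate map $\Psi\colon K\times\frakm\to G$, $(k,X)\mapsto k\exp(X)$, which is visibly smooth. Because $p_0\cdot k=p_0$ for every $k\in K$, we get $\pi\circ\Psi=\Phi\circ\mathrm{pr}_2$. Injectivity of $\Psi$ follows: if $k_1\exp(X_1)=k_2\exp(X_2)$, applying $\pi$ gives $\Phi(X_1)=\Phi(X_2)$, hence $X_1=X_2$ since $\Phi$ is injective, and then $k_1=k_2$. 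For surjectivity, given $g\in G$ put $X:=\Phi^{-1}(p_0\cdot g)\in\frakm$; then $p_0\cdot g=p_0\cdot\exp(X)$, so $g\exp(-X)$ fixes $p_0$ and therefore lies in $K$, whence $g=(g\exp(-X))\exp(X)$ is in the image of $\Psi$.

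Finally, to upgrade this bijection to a diffeomorphism I would exhibit the inverse explicitly: $\Psi^{-1}(g)=\bigl(g\exp(-X),\,X\bigr)$ with $X=\Phi^{-1}(p_0\cdot g)$. Here $X$ depends smoothly on $g$ because $\Phi^{-1}$ and $\pi$ are smooth, and then $g\exp(-X)$ depends smoothly on $g$ as well; thus $\Psi^{-1}$ is smooth and $\Psi$ is a diffeomorphism, as claimed. (One could instead argue that $\Psi$ is a bijective local diffeomorphism by a dimension count using $\dif\Psi$ at each point, but the explicit inverse is cleaner.)

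The step I expect to be the main obstacle is the first one — establishing that $\Phi$, equivalently $\exp_{p_0}$, is a \emph{global} diffeomorphism — since this is precisely where the non-compact type hypothesis is used and where genuinely Riemannian input (non-positive curvature and Cartan--Hadamard, or Helgason's structure theorem) is required. Once that input is in place, everything else is formal bookkeeping with the orbit map and the isotropy subgroup $K$.
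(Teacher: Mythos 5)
The paper gives no internal proof of this proposition: it is quoted as a consequence of \cite[Chapter~VI,~Theorem~1.1]{Helgason}, whose proof is Lie-theoretic (essentially polar decomposition in the adjoint group), so your argument is a genuinely different, Riemannian-geometric route. Your reduction is sound and well organized: identifying $\Phi(X)=p_0\cdot\exp(X)$ with $\exp_{p_0}\circ\,\dif\pi_I|_{\frakm}$, deducing injectivity and surjectivity of $\Psi$ from those of $\Phi$, and exhibiting the inverse $\Psi^{-1}(g)=\bigl(g\exp(-X),X\bigr)$ with $X=\Phi^{-1}(p_0\cdot g)$ is exactly the bookkeeping the paper itself performs later in the concrete case (the proof that $\widehat{\pi}$ is a diffeomorphism in Proposition~\ref{prop:Sp(1,1)=KB}), so that part matches the paper's methods closely. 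The step that needs more care than you give it is the Cartan--Hadamard branch: completeness and non-positive curvature only make $\exp_{p_0}$ a covering map, and upgrading this to a diffeomorphism requires $M$ to be simply connected. That is true for symmetric spaces of non-compact type, but in \cite{Helgason} it is obtained as a corollary of the very decomposition $G\simeq K\times\frakm$ you are proving, and the paper's definition of non-compact type constrains only $I_0(M)$, not the topology of $M$; so this branch risks circularity unless you supply an independent argument that $\pi_1(M)=0$ (or argue injectivity of $\exp_{p_0}$ directly, e.g.\ via the Jacobi-field computation showing $\dif(\exp_{p_0})$ is everywhere nonsingular together with a metric-expanding argument). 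The alternative branch, quoting \cite[Chapter~VI,~Theorem~1.1]{Helgason} for the global bijectivity of $\Phi$, is safe but then your proof reduces to the paper's citation plus formal bookkeeping. What your approach buys, if the Hadamard step is completed honestly, is a proof in which the geometric input (geodesic completeness and curvature) is visible rather than hidden inside a black-box structure theorem.
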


\begin{remark}\label{rmk:G=KM}
	We note that the version of the previous result stated in \cite{Helgason} makes use of isometric left actions. Since we will consider right actions of the isometry group, the product that defines the map above is reversed with respect to that reference. On the other hand, this result holds with more generality. More precisely, it is possible to consider Riemannian symmetric pairs $(G,K)$ where $G$ is a cover of the identity component of the isometry group (see~\cite[Chapter~VI]{Helgason}). We will have opportunity to use this fact.
\end{remark}

The homogeneous realization $K \backslash G$ of a Riemannian symmetric space $M$ of non-compact type given by Proposition~\ref{prop:RiemSymmProperties} encompasses all the symmetries of $M$ through those of $G$. This can be explained as follows. For the Lie group structure of $G$ there are three natural types of symmetries: for $g \in G$ the left translations $L_g$, the right translations $R_g$ and the conjugations $C_g$, which are defined as maps $G \rightarrow G$ given by the following expressions 
\begin{equation}\label{eq:LRCmaps}
	L_g(h) = gh, \quad R_g(h) = hg, \quad C_g(h) = L_g \circ R_{g^{-1}}(h) = ghg^{-1}.
\end{equation}
Furthermore, the conjugations yield the identity component of the automorphism group of $G$ when this group is semisimple. This follows from the fact that every automorphism of a group induces one of the corresponding Lie algebra (see~\cite[Chapter~II,~Lemma~1.12]{Helgason}) and that every automorphism of a semisimple Lie algebra comes from a conjugation (see~\cite[Chapter~II,~Corollary~6.5]{Helgason}). The natural question to ask here is which of these symmetries of $G$ descend to corresponding ones of $M$. The answer is provided in the next result, whose proof is based on the Lie theory of symmetric spaces. 

\begin{proposition}\label{prop:RiemSymmCentralizers}
	Let $M$ be a Riemannian symmetric space of non-compact type with homogeneous realization $K\backslash G$ given by Proposition~\ref{prop:RiemSymmProperties}. In particular, $G$ is a connected non-compact semisimple Lie group without compact factors. Let us assume that $G$ has finite center. Then, a diffeomorphism $\widetilde{f} : G \rightarrow G$ from the list \eqref{eq:LRCmaps} induces a diffeomorphism $f : M \rightarrow M$ so that the diagram
	\[
	\xymatrix{
		G\ar[rr]^{\widetilde{f}} \ar[d] && G\ar[d] \\
		M\ar[rr]^f && M
	}
	\]
	commutes, where the vertical arrows are the natural projections $G \rightarrow K \backslash G \simeq M$, exactly in the following cases
	\begin{enumerate}
		\item $\widetilde{f} = R_g$, for some $g \in G$, which induces the map: $f(Kx) = K(xg)$, where $Kx \in K\backslash G$.
		\item $\widetilde{f} = C_g$, for some $g \in K$, which induces the map: $f(Kx) = K(gxg^{-1}) = K(xg^{-1})$, where $Kx \in K\backslash G$.
	\end{enumerate}
	Hence, the maps from (2) form a subgroup of those from (1). Furthermore, all such maps $f$, thus obtained, yield the identity component of the isometry group of $M$. In particular, a diffeomorphism $f$ of $M$ belongs to the identity component of its isometry group if and only if it is induced by a map $\widetilde{f}$ which is either a right or left translation of $G$ that commutes with the left action of $K$.
\end{proposition}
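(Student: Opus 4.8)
The plan is to reduce the whole statement to one elementary criterion for descent of maps, and then feed in a single structural fact about maximal compact subgroups. First I would record that a diffeomorphism $\widetilde f\colon G\to G$ induces an $f\colon K\backslash G\to K\backslash G$ making the square commute exactly when $\widetilde f$ permutes the right cosets of $K$, i.e.\ when $K\,\widetilde f(kx)=K\,\widetilde f(x)$ for all $k\in K$ and $x\in G$; in that case $f(Kx)=K\,\widetilde f(x)$ is forced and is automatically a diffeomorphism of $M\simeq K\backslash G$. Testing the three families of \eqref{eq:LRCmaps} against this criterion is then a short computation. For $\widetilde f=R_g$ one has $R_g(kx)=k(xg)\in K\,R_g(x)$ with no condition on $g$, giving $f(Kx)=Kxg$. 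For $\widetilde f=L_g$, specializing the coset condition at $x=e$ already forces $gkg^{-1}\in K$ for every $k\in K$, i.e.\ $g$ lies in the normalizer $N_G(K)$ of $K$, and conversely $g\in N_G(K)$ clearly suffices. For $\widetilde f=C_g=L_g\circ R_{g^{-1}}$ the coset condition collapses, after cancellation, again to $gkg^{-1}\in K$ for all $k\in K$, that is, to $g\in N_G(K)$, with induced map $f(Kx)=K(gxg^{-1})$.

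The second step is to invoke the self-normalizing property $N_G(K)=K$, valid because $K$ is a maximal compact subgroup of the connected semisimple Lie group $G$ with finite center. I would prove it from the Cartan-type decomposition of Proposition~\ref{prop:G=KM}: writing $n\in N_G(K)$ as $n=k\exp(X)$ with $X\in\frakm$, conjugation by $\exp(X)$ preserves $K$, hence $\Ad(\exp X)=e^{\ad X}$ preserves $\frakk$, hence $[X,\frakk]\subseteq\frakk\cap[\frakm,\frakk]\subseteq\frakk\cap\frakm=0$; since also $[\frakm,\frakm]\subseteq\frakk$ this gives $\ad(X)^2=0$ on $\frakg=\frakk\oplus\frakm$, and as $\ad(X)$ is symmetric with respect to a Cartan-adapted inner product it must vanish, so $X=0$ by semisimplicity. (One could instead simply cite the standard fact.) Feeding $N_G(K)=K$ into the previous paragraph shows that $L_g$ and $C_g$ descend only for $g\in K$: then $L_g$ induces the identity, while $C_g$ induces $f(Kx)=K(gxg^{-1})=K(xg^{-1})$ since $g\in K$. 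This is precisely the dichotomy (1)--(2), and it exhibits the map of (2) attached to $g\in K$ as the map of (1) attached to $g^{-1}\in K$, so the maps of (2) form a subgroup of those of (1).

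It remains to identify the totality of these maps with $I_0(M)$ and to derive the final characterization. The maps of (1) are $f_g\colon Kx\mapsto Kxg$ for $g\in G$, and $g\mapsto f_g$ is exactly the right action of $G$ on $K\backslash G$; under the $G$-equivariant identification $K\backslash G\simeq M$ of Proposition~\ref{prop:RiemSymmProperties}(1) this is the action of $G=I_0(M)$ on $M$, whose image is all of $I_0(M)$ — and here the finite-center hypothesis is what lets the same argument go through when $G$ is only a cover of the isometry group, the kernel of the action being the finite central subgroup of $G$ sitting inside $K$. For the concluding assertion I would then note that a right translation $R_g$ commutes with the left $K$-action $h\mapsto kh$ for every $g$, whereas $L_g$ commutes with it iff $gk=kg$ for all $k\in K$, which forces $g\in Z_G(K)\subseteq N_G(K)=K$ and hence $f=\mathrm{id}_M$; thus the maps in \eqref{eq:LRCmaps} that descend and commute with the left $K$-action are exactly the right translations (by arbitrary $g\in G$) together with harmless left translations by elements of $K$, and these induce precisely $I_0(M)$.

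The real content is the self-normalizing property $N_G(K)=K$, which is what turns the a priori condition ``$g\in N_G(K)$'' into ``$g\in K$'' and thereby kills all of the $L_g$ and $C_g$ with $g\notin K$; the rest is bookkeeping with cosets. The other place demanding care is the passage from ``$\widetilde f$ descends'' to ``the induced $f$ exhausts $I_0(M)$'', where one must combine Proposition~\ref{prop:RiemSymmProperties}(1) with the finite-center hypothesis to account for $G$ possibly being a proper cover of the isometry group.
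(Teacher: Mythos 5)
Your proposal is correct and follows essentially the same route as the paper: reduce everything to the descent criterion, observe that $R_g$ always descends while $L_g$ and $C_g$ descend exactly when $g$ normalizes $K$, establish that the normalizer of $K$ is $K$ itself using the Cartan decomposition $G=K\exp(\frakm)$ and the bracket relations of the symmetric pair, and then identify the surviving maps with the right $G$-action, i.e.\ with $I_0(M)$. The only difference is cosmetic: the paper computes the Lie algebra $\frakl=\{X:[X,\frakk]\subset\frakk\}=\frakk$ of the normalizer and then invokes $G=K\exp(\frakm)$, whereas you argue directly at the group level via $\ad(X)^2=0$ (where the step from ``$e^{\ad X}$ preserves $\frakk$'' to ``$[X,\frakk]\subset\frakk$'' deserves the one extra line using the $B_\theta$-symmetry of $\ad X$ that you already invoke).
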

\begin{proof}
	The right translations $R_g$ clearly induce corresponding diffeomorphisms on $M \simeq K \backslash G$, and the conjugations are expressed as $C_g = L_g \circ R_{g^{-1}}$. Hence, to determine the maps from the list \eqref{eq:LRCmaps} that descend to $M$ according to the diagram in the statement, it is enough to consider left translations.
	
	A left translation $L_g$, given by some $g \in G$, descends to a well defined diffeomorphism of  $K \backslash G$ if and only if $ghx \in Kgx$ for every $h \in K$ and $x \in G$. This is equivalent to the condition on $g \in G$ given by $C_g(K) = K$.	By considering the differential of $C_g$, which defines the adjoint representation $\Ad$ of $G$ (see~\cite[Chapter~II,~Section~5]{Helgason}), we conclude that this is equivalent to
	\[
		\Ad(g)(\frakk) = \frakk,
	\]
	where $\frakk$ is the Lie algebra of $K$. This is a consequence of the properties of the adjoint representation and the fact that $K$ is necessarily connected (see \cite[Chapter~VI,~Theorem~1.1]{Helgason}). Using again the properties of the adjoint representation, the subgroup of elements $g \in G$ that satisfy the previous identities has Lie algebra
	\[
		\frakl = \{ X \in \frakg : [X,\frakk] \subset \frakk \}.
	\]
	By considering the decomposition $\frakg = \frakk \oplus \frakm$ introduced in Proposition~\ref{prop:RiemSymmProperties}(3) and using that it defines an orthogonal symmetric pair of non-compact type (see \cite[Chapter~V,~Section~1]{Helgason}) we conclude that
	$\frakl = \frakk$. Hence, the identity component of the subgroup of $g \in G$ such that $L_g$ descends to $M$ is precisely $K$. Then, it follows from Proposition~\ref{prop:G=KM} that $K$ is precisely the whole subgroup of such elements. In this case, i.e.~for $g \in K$, the map $L_g$ induces the identity map on $K\backslash G \simeq M$, and so the conjugation $C_g$ and the right translation $R_{g^{-1}}$ induce the same map on $M$. Since the right $G$-action commutes with left $K$-action, the last claim is now obvious.
	
	The previous arguments complete the description stated in (1) and (2). That the latter is a subgroup of the former follows from an easy computation. Hence, we conclude that all symmetries of $G$ from \eqref{eq:LRCmaps}, that descend to $M$, induce the right $G$-action on $M$ which is precisely the identity component of the isometry group of $M$ (see~Proposition~\ref{prop:RiemSymmProperties}).
\end{proof}

\begin{remark}\label{rmk:RiemSymmCentralizers}
	Proposition~\ref{prop:RiemSymmCentralizers} shows that the connected isometry group $I_0(M)$ of a Riemannian symmetric space $M = K \backslash G$ of non-compact type is built out of the symmetries of $G$ coming from the right $G$-action on itself. As stated in this proposition, both a left or a right translation of $G$ need to commute with the left $K$-action in order to yield a symmetry of $M$. Considering the action on the right this leads us to the whole group $G$. However, on the left this leads to consider the subgroup
	\[
		C_G(K) = \{ g \in G : gk = kg \text{ for all } k \in K \},
	\]
	which is known as the centralizer of $K$ in $G$. Hence, the group $I_0(M)$ can be alternatively described as the group of diffeomorphisms coming from either of the following symmetries of $G$
	\begin{enumerate}
		\item $\{ L_g : g \in C_G(K) \}$,
		\item $\{ R_g : g \in G = C_G(\{I\}) \}$,
	\end{enumerate}
	where we have used the trivial fact $G = C_G(\{I\})$. We conclude that $I_0(M)$ comes from the centralizers of $K$ and of $\{I\}$. It is important to note the very much opposite nature of $C_G(K)$ and $C_G(\{I\})$. The latter is all of $G$, while the former can be seen (using arguments from the proof of Proposition~\ref{prop:RiemSymmCentralizers}) to be the center of $G$, which is assumed to be finite.
\end{remark}

\subsection{The quaternionic unit ball $\B$ as Riemannian symmetric space}
\label{subsec:RiemSymmB}
There is a well known and natural Riemannian symmetric space structure on the quaternionic unit ball
\[
	\B = \{ q \in \HH : |q| < 1 \},
\]
where $\HH$ denotes the division ring of quaternions. Here, we will describe some of the basic features of such structure of $\B$.

Let us consider the quaternionic symplectic Lie group with signature $(1,1)$, which we will define as
\[
	\Spe(1,1) = \{ A \in M_2(\HH) : A^* I_{1,1} A = I_{1,1} \},
\]
where $M_2(\HH)$ is the algebra of $2 \times 2$ quaternionic matrices and $A^*$ denotes the conjugate transpose of a matrix $A$. From now on, we will denote by $\diag(p,q)$ the diagonal $2 \times 2$ matrix whose diagonal is given by the ordered pair $(p,q)$. As a particular case, we will denote $I_{1,1} = \diag(1,-1)$.

Recall that there is a smooth right action of $\Spe(1,1)$ on $\B$ defined by
\begin{equation}\label{eq:MobiusClassical}
	q \cdot A = (q c + d)^{-1} (qa + b) = F_A(q), 
\end{equation}
where the matrix $A \in \Spe(1,1)$ has entries given by
\[
	A =
	\begin{pmatrix}
		a & c \\
		b & d
	\end{pmatrix}.
\]
The map $F_A$ thus defined will be called a \textbf{classical M\"obius transformation}. In particular, the collection $\Mbb(\B)$ of all such transformations is a group so that the map $A \mapsto F_A$ is an anti-homomorphism. 

It is well-known (see \cite{ArcozziSarfatti,BisiGentiliMobius,BisiStoppatoMobius,QB-SliceKahler}) that the $\Spe(1,1)$-action \eqref{eq:MobiusClassical} is isometric for the quaternionic Poincar\'e metric on $\B$ given by
\begin{equation}\label{eq:PoncaireMetric}
	(\alpha,\beta) \mapsto \widehat{g}_q(\alpha,\beta) = 
			\frac{\re(\alpha \overline{\beta})}{(1-|q|^2)^2},
\end{equation}
for every $q \in \B$ and $\alpha,\beta \in T_q\B$. From now, we will use the pair $(\B, \widehat{g})$ to denote the corresponding Riemannian manifold, i.e.~the quaternionic hyperbolic geometry on $\B$ defined by $\widehat{g}$. This notation will avoid any confusion when introducing other Riemannian metrics, which is actually one of the main goals of this work: to consider alternative geometries on $\B$ besides the quaternionic hyperbolic one.

The $\Spe(1,1)$-action on $\B$ given by \eqref{eq:MobiusClassical} is transitive with the subgroup $\{\pm I_2\}$ providing the only elements acting trivially. Since the map $q \mapsto -q$ is clearly an isometry for the metric \eqref{eq:PoncaireMetric} with $0$ as unique fixed point, it follows from Proposition~\ref{prop:symmhomogeneous} that $(\B,\widehat{g})$ is a Riemannian symmetric space. The isotropy subgroup of $\Spe(1,1)$ for $0$ is precisely
\[
	\Spe(1) \times \Spe(1) = \{ \diag(u,v) : u,v \in \Spe(1) \},
\]
as one can easily compute. It follows that $\B$ admits the homogeneous space realization
\begin{equation}\label{eq:BasSymmQuotient}
	\B \simeq (\Spe(1) \times \Spe(1)) \backslash \Spe(1,1).
\end{equation}

\begin{remark}\label{rmk:Sp(1,1)vsHelgason}
	The concrete Lie groups considered in \cite{Helgason} are all given by complex matrices. Hence, to apply the Lie theory developed in that reference, we need to discuss the Lie group $\Spe(1,1)$ and its realizations. 
	
	Let us consider the map $\psi : M_2(\HH) \rightarrow M_4(\C)$ defined by
	\begin{equation}\label{eq:psi}
		\psi(Z + W j) =
			\begin{pmatrix*}[r]
				Z & W \\
				-\overline{W} & \overline{Z}
			\end{pmatrix*},
	\end{equation}
	where $Z, W \in M_2(\C)$. It is a straightforward exercise to prove that $\psi$ is a monomorphism (injective homomorphism) of real algebras that maps $I_2$ to $I_4$. Hence, the restriction of $\psi$ to $\GL(2,\HH)$ defines a Lie group monomorphism into $\GL(4,\C)$. A direct computation also proves that the following identities are satisfied.
	\begin{enumerate}
		\item For every $A \in M_2(\HH)$: $\psi(A^*) = \psi(A)^*$.
		\item For every $Z, W \in M_2(\C)$: $\psi(Z^\top - W^* j) = \psi(Z + Wj)^\top$.
	\end{enumerate}
	Let us now consider the matrices $J_2 = \psi(I_2 j)$ and $K_{1,1} = \psi(I_{1,1})$. Then, it is easy to see that
	\[
		\psi(\GL(2,\HH)) = \{ M \in \GL(4,\C) : M J_2 = J_2 \overline{M} \}.
	\]
	Using the previous facts we conclude that
	\[
		\psi(\Spe(1,1)) = \{ M \in \GL(4,\C) : M^* K_{1,1} M = K_{1,1}, \;
							M J_2 = J_2 \overline{M} \}.
	\]
	Next, we observe that the two conditions on $M \in \GL(4,\C)$ that define $\psi(\Spe(1,1))$ are equivalent to 
	\[
		M^* K_{1,1} M = K_{1,1}, \quad M^\top K_{1,1} J_2 M = K_{1,1} J_2.
	\]
	If we define the matrix
	\[
		K_{1,i} = 
		\begin{pmatrix}
			\diag(1,i) & 0 \\
			0 & \diag(1,i)
		\end{pmatrix},
	\]
	then it is easy to check that $K_{1,i}^2 = K_{1,1}$ and that all three matrices $J_2$, $K_{1,1}$ and $K_{1,i}$ pairwise commute. In particular, we have
	\[
		K_{1,i}^* K_{1,1} K_{1,i} = K_{1,1}.
	\]
	It follows that if we consider the isomorphism $\rho : \GL(4,\C) \rightarrow \GL(4,\C)$ given by 
	\[
		\rho(M) = K_{1,i} M K_{1,i}^{-1},
	\]
	then $\rho$ maps $\psi(\Spe(1,1))$ onto the subgroup
	\begin{equation}\label{eq:Sp(1,1)Helgason}
		\widehat{\Spe}(1,1) = 
			\{M \in \GL(4,\C) : M^* K_{1,1} M = K_{1,1}, \; M^\top J_2 M = J_2\}.
	\end{equation}
	The latter yields the definition of $\Spe(1,1)$ in its realization as a group of complex matrices (see~\cite[page~445]{Helgason}). In particular, the previous discussion shows that
	\[
		\rho \circ \psi : \Spe(1,1) \rightarrow \widehat{\Spe}(1,1),
	\]
	is an isomorphism of Lie groups. This allows us to apply the theory of Riemannian symmetric spaces developed in \cite{Helgason} to the realization \eqref{eq:BasSymmQuotient} of the unit ball obtained above from the classical M\"obius transformations.
\end{remark}

As a consequence of the discussion in Remark~\ref{rmk:Sp(1,1)vsHelgason} we find that $(\B,\widehat{g})$ is a Riemannian symmetric space. We refer to \cite[Table~V,~p.~518]{Helgason} for further details on the notation below.

\begin{corollary}\label{cor:BRiemSymm}
	The Riemannian manifold $(\B,\widehat{g})$ is the Riemannian symmetric space of type CII associated to the symmetric pair $(\Spe(1,1), \Spe(1) \times \Spe(1))$ and the automorphism of $\Spe(1,1)$ given by $A \mapsto I_{1,1} A I_{1,1}$.
\end{corollary}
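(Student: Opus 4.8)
The plan is to combine the general symmetric-space machinery of Subsection~\ref{subsec:RiemSymmSpaces} with an explicit description of the geodesic symmetry at the origin, and then to attach the Cartan label by transporting everything to Helgason's realization via the comparison set up in Remark~\ref{rmk:Sp(1,1)vsHelgason}.

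First I would recall that $(\B,\widehat{g})$ is a Riemannian symmetric space: the $\Spe(1,1)$-action \eqref{eq:MobiusClassical} is transitive and isometric for \eqref{eq:PoncaireMetric}, while $q \mapsto -q$ is an isometry with $0$ as unique fixed point, so Proposition~\ref{prop:symmhomogeneous} applies. The key observation is that this symmetry at the origin is itself a classical M\"obius transformation: substituting $A = I_{1,1} = \diag(1,-1)$ into \eqref{eq:MobiusClassical} gives $F_{I_{1,1}}(q) = (-1)^{-1}q = -q$. Since $I_{1,1} \in \Spe(1,1)$ with $I_{1,1}^{-1} = I_{1,1}$, the map $A \mapsto I_{1,1}AI_{1,1}$ is an involutive (inner) automorphism of $\Spe(1,1)$, and because $A \mapsto F_A$ is an anti-homomorphism a short computation yields $f_0 \circ F_A \circ f_0^{-1} = F_{I_{1,1}AI_{1,1}}$ for all $A$, where $f_0 = F_{I_{1,1}}$. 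Passing to the effective quotient $\Spe(1,1)/\{\pm I_2\}$ — or, as allowed by Remark~\ref{rmk:G=KM}, retaining the connected cover $\Spe(1,1)$ of $I_0(\B,\widehat{g})$ — the involutive automorphism $\sigma$ of Proposition~\ref{prop:RiemSymmProperties}(2) is therefore induced by $A \mapsto I_{1,1}AI_{1,1}$. Moreover $I_{1,1}\diag(u,v)I_{1,1} = \diag(u,v)$ for $u,v \in \Spe(1)$, so the connected group $\Spe(1)\times\Spe(1)$ lies in — hence equals the identity component of — the fixed-point subgroup of this automorphism, consistently with the isotropy computation behind \eqref{eq:BasSymmQuotient} and with Proposition~\ref{prop:RiemSymmProperties}(2). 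Since $\Spe(1,1)$ is non-compact, simple (its complexification being of type $C_2$), and without compact factors, the space is of non-compact type. This exhibits $(\B,\widehat{g})$ as the Riemannian symmetric space of non-compact type attached to the symmetric pair $(\Spe(1,1),\Spe(1)\times\Spe(1))$ with involution $A \mapsto I_{1,1}AI_{1,1}$.

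It then remains only to recognize this pair in Helgason's tables as type CII. For this I would carry the pair and the involution through the Lie group isomorphism $\rho\circ\psi:\Spe(1,1)\to\widehat{\Spe}(1,1)$ of Remark~\ref{rmk:Sp(1,1)vsHelgason}, verify that $A\mapsto I_{1,1}AI_{1,1}$ corresponds to the structural involution Helgason uses for the family $\Spe(p,q)/(\Spe(p)\times\Spe(q))$ specialized to $p=q=1$, and confirm that the fixed-point subgroup of the transported involution is the image of $\Spe(1)\times\Spe(1)$; the type name CII and the numerical data (in particular $\dim_\R\B = 4$) are then read off from \cite[Table~V,~p.~518]{Helgason}. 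I expect the only real difficulty to be bookkeeping: Helgason's conventions — left versus right actions, the precise normalizations of $J_2$, $K_{1,1}$ and $K_{1,i}$, and the exact form of his defining involution for $\widehat{\Spe}(1,1)$ — must be reconciled with those used here. This is precisely the reconciliation already begun in the computations of Remark~\ref{rmk:Sp(1,1)vsHelgason}, so no genuinely new obstacle arises; once the two involutions are seen to agree under $\rho\circ\psi$, the corollary follows at once.
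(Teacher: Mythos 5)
Your proposal is correct and follows essentially the same route as the paper: the corollary is proved by transporting the pair $(\Spe(1,1),\Spe(1)\times\Spe(1))$ and the involution $A\mapsto I_{1,1}AI_{1,1}$ through the isomorphism $\rho\circ\psi$ of Remark~\ref{rmk:Sp(1,1)vsHelgason}, using $\psi(I_{1,1})=K_{1,1}$ (and the pairwise commutation of $J_2$, $K_{1,1}$, $K_{1,i}$) to match Helgason's Table~V entry for type CII. The additional verifications you include (that $F_{I_{1,1}}(q)=-q$, that $\sigma$ is the geodesic symmetry conjugation, and that $\Spe(1)\times\Spe(1)$ is the fixed-point set) are exactly the content of the surrounding discussion in subsection~\ref{subsec:RiemSymmB}, not a different argument.
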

\begin{proof}
	From \cite[Table~V,~p.~518]{Helgason} we know that the Riemannian symmetric space of type CII is given by $\widehat{\Spe}(1,1)$ (see~\ref{eq:Sp(1,1)Helgason}) with the automorphism $M \mapsto K_{1,1} M K_{1,1}$. Using the isomorphism $\rho\circ \psi$ from Remark~\ref{rmk:Sp(1,1)vsHelgason}, the result follows from a straightforward computation using that $\psi(I_{1,1}) = K_{1,1}$.
\end{proof}

Since $(\B,\widehat{g})$ is a Riemannian symmetric space coming from the group $\Spe(1,1)$, the identity component $I_0(\B, \widehat{g})$ of its isometry group is naturally isomorphic to $\Spe(1,1)/\{\pm I_2\}$, where we have taken quotient by the center. As the subgroup $\{\pm I_2\}$ is finite, replacing $I_0(\B, \widehat{g})$ with $\Spe(1,1)$ does not affect the Lie theory claims made so far and in the rest of this work. This follows from \cite[Chapter~IV,~Proposition~3.4]{Helgason}, which allows to realize Riemannian symmetric spaces with Riemannian symmetric pairs not necessarily given by the identity component of the isometry group. We also refer to \cite{BisiGentiliMobius,QB-SliceKahler} for more elementary approaches to this fact.

We will now apply the results from subsection~\ref{subsec:RiemSymmSpaces} to obtain further Lie theory and geometric invariants associated to the Riemannian symmetric space $(\B,\widehat{g})$.

According to \eqref{eq:MobiusClassical}, the matrix $I_{1,1}$, which clearly belongs to $\Spe(1,1)$, yields the isometry $q \mapsto -q$. Hence, the automorphism $\sigma : \Spe(1,1) \rightarrow \Spe(1,1)$ corresponding, in this case, to Proposition~\ref{prop:RiemSymmProperties}(2) is given by
\[
	\sigma\bigg(
		\begin{pmatrix}
			a & c \\
			b & d
		\end{pmatrix}
		\bigg) = I_{1,1} 
		\begin{pmatrix}
			a & c \\
			b & d
		\end{pmatrix}
		I_{1,1} 
		= \begin{pmatrix*}[r]
			a & -c \\
			-b & d
		\end{pmatrix*}.
\]
The fixed point set of $\sigma$ is the subgroup $\Spe(1) \times \Spe(1)$. It follows that \eqref{eq:BasSymmQuotient} yields the realization of $\B$ as homogeneous space stated in Proposition~\ref{prop:RiemSymmProperties}(1).

On the other hand, the tangent space at the identity $I_2$ of the Lie group $\Spe(1,1)$ is given by
\begin{align*}
	\spe(1,1) &= \{ X \in M_2(\HH) : X^* I_{1,1} + I_{1,1} X = 0 \} \\
		&= \Bigg\{
			\begin{pmatrix}
				p & \overline{a} \\
				a & q
			\end{pmatrix} : p,q \in \im(\HH), a \in \HH
			\Bigg\},
\end{align*}
which follows from a simple computation. Furthermore, the arguments from \cite[Example,~p.110]{Helgason} and \cite[Chapter~II,~Theorem~2.1]{Helgason} show that the Lie algebra structure of $\spe(1,1)$ is given by the Lie brackets $[X,Y] = XY - YX$, the commutator of matrices. This fact can also be proved using the algebra homomorphism $\psi$ from Remark~\ref{rmk:Sp(1,1)vsHelgason}.

Note that the expression defining $\sigma$ naturally extends to a real linear map on the space of matrices $M_2(\HH)$, and so the differential of $\sigma$ at the identity $I_2$ yields the automorphism of Lie algebras $\dif \sigma_{I_2} : \spe(1,1) \rightarrow \spe(1,1)$ given by the same formula
\[
	\dif \sigma_{I_2}
		\bigg(
		\begin{pmatrix}
			a & c \\
			b & d
		\end{pmatrix}
		\bigg) 
		= \begin{pmatrix*}[r]
			a & -c \\
			-b & d
		\end{pmatrix*}.
\]
Hence, the subspaces considered in Proposition~\ref{prop:RiemSymmProperties}(3) corresponding to the Riemannian symmetric space $(\B,\widehat{g})$ are given as follows
\begin{align*}
	\frakk_\B &= \spe(1) \times \spe(1) 
			= \{\diag(p,q) : p,q \in \im(\HH) \}  \\
	\frakm_\B &= 
		\Bigg\{
		\begin{pmatrix}
			0 & \overline{q} \\
			q & 0
		\end{pmatrix} : q \in \HH
		\Bigg\}.
\end{align*}
We will use this notation in the rest of this work.

To complete our description of the Riemannian symmetric space $(\B,\widehat{g})$, we compute the exponential map of $\Spe(1,1)$ in the next result.

\begin{proposition}\label{prop:expSp(1,1)}
	The Lie group exponential map $\exp : \spe(1,1) \rightarrow \Spe(1,1)$ is given by the usual matrix exponentiation. In other words, we have
	\[
		\exp(tX) = \sum_{k=0}^{+\infty} \frac{t^k X^k}{k!},
	\]
	for every $X \in \spe(1,1)$. In particular, the exponential map restricted to $\frakm_\B$ can be expressed through the formula
	\[
		\exp
		\bigg(t
			\begin{pmatrix}
				0 & \overline{u} \\
				u & 0
			\end{pmatrix}
		\bigg) 
		= 
			\begin{pmatrix*}[r]
				\cosh(t) & \sinh(t)\overline{u} \\
				\sinh(t) u & \cosh(t)
			\end{pmatrix*} 
	\]
	where $t \in \R$ and $u \in \Spe(1)$.
\end{proposition}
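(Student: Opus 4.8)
The plan is to establish the first claim — that the abstract Lie group exponential of $\Spe(1,1)$ agrees with the matrix exponential — and then specialize the matrix exponential to $\frakm_\B$ by a direct computation. For the first part, I would invoke the characterization of the exponential map recalled in Remark~\ref{rmk:exp}: it suffices to show that for each $X \in \spe(1,1)$ the curve $\gamma(t) = \sum_{k\ge 0} t^k X^k / k!$ is a one-parameter subgroup of $\Spe(1,1)$ with $\gamma'(0) = X$. Three things must be checked: (i) the series converges and defines a smooth curve in $M_2(\HH)$ — this is standard, since matrix exponentiation over $\HH$ converges absolutely in any submultiplicative norm, and indeed can be transported through the real-algebra monomorphism $\psi$ of Remark~\ref{rmk:Sp(1,1)vsHelgason} to ordinary complex matrix exponentiation, where convergence and smoothness are classical; (ii) $\gamma(s+t) = \gamma(s)\gamma(t)$, which follows from the usual power-series manipulation because $sX$ and $tX$ commute; and (iii) $\gamma(t)$ actually lands in $\Spe(1,1)$, i.e.\ $\gamma(t)^* I_{1,1} \gamma(t) = I_{1,1}$ for all $t$. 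For (iii) I would differentiate $h(t) = \gamma(t)^* I_{1,1} \gamma(t)$, using $\gamma'(t) = X\gamma(t) = \gamma(t) X$ and $X^* I_{1,1} + I_{1,1} X = 0$ (the defining relation of $\spe(1,1)$), to get $h'(t) = \gamma(t)^*(X^* I_{1,1} + I_{1,1} X)\gamma(t) = 0$; since $h(0) = I_{1,1}$, we conclude $h \equiv I_{1,1}$. Together with $\gamma'(0) = X$, the uniqueness in Remark~\ref{rmk:exp} gives $\exp(tX) = \gamma(t)$.

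For the second part, I would take $X = \begin{pmatrix} 0 & \overline{u} \\ u & 0 \end{pmatrix}$ with $u \in \Spe(1)$, so $|u|^2 = u\overline{u} = \overline{u}u = 1$, and compute $X^2 = \begin{pmatrix} \overline{u}u & 0 \\ 0 & u\overline{u}\end{pmatrix} = I_2$. Hence $X^{2k} = I_2$ and $X^{2k+1} = X$, and splitting the exponential series into even and odd parts yields
\[
	\exp(tX) = \sum_{k\ge 0} \frac{t^{2k}}{(2k)!} I_2 + \sum_{k\ge 0} \frac{t^{2k+1}}{(2k+1)!} X
		= \cosh(t) I_2 + \sinh(t) X,
\]
which is exactly the stated matrix $\begin{pmatrix*}[r] \cosh(t) & \sinh(t)\overline{u} \\ \sinh(t) u & \cosh(t)\end{pmatrix*}$.

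I do not anticipate a serious obstacle here; the only point requiring a little care is the passage between quaternionic and complex matrices. One must make sure that the notion of "matrix exponential" over $M_2(\HH)$ is unambiguous (it is, because $\HH$ is associative, so powers $X^k$ and hence the partial sums are well defined) and that $\psi$ being a real-algebra homomorphism implies $\psi(\exp_{\mathrm{mat}}(X)) = \exp_{\mathrm{mat}}(\psi(X))$, so that convergence and smoothness in $M_4(\C)$ transfer back. Alternatively, one can avoid $\psi$ altogether and argue convergence directly using the operator norm on $M_2(\HH)$ acting on $\HH^2$; either route is routine. A minor secondary point is noting that $\frakm_\B$ consists of matrices $\begin{pmatrix} 0 & \overline{q} \\ q & 0\end{pmatrix}$ for arbitrary $q \in \HH$, so the formula stated for $u \in \Spe(1)$ combined with scaling $t \mapsto t|q|$ (and writing $q = |q|u$) in fact recovers $\exp$ on all of $\frakm_\B$; but since the proposition only asserts the unit-quaternion case, the computation above suffices.
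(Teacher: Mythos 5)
Your proposal is correct and follows essentially the same route as the paper: the paper likewise identifies the matrix-exponential series as a one-parameter subgroup (leaving the homomorphism property, and implicitly the verification that the curve stays in $\Spe(1,1)$, as a ``standard exercise'') and then obtains the second formula by the same direct computation via $X^2 = I_2$. Your explicit check that $\gamma(t)^* I_{1,1}\gamma(t) \equiv I_{1,1}$ by differentiation simply fills in a detail the paper omits.
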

\begin{proof}
	If we denote by $\gamma(t)$ the right hand-side of the first formula in the statement, then it is a standard exercise to show that $\gamma(s + t) = \gamma(s) \gamma(t)$. Hence, the first claim follows. The second formula is consequence of a straightforward computation. For this note that every $q \in \HH$ can be written $q = tu$ for some $t \in \R$ and $u \in \Spe(1)$.
\end{proof}

As a consequence of Propositions~\ref{prop:RiemSymmProperties}(4) and \ref{prop:expSp(1,1)}, we conclude that the geodesics in $(\B,\widehat{g})$ through the origin are given by
\begin{equation}\label{eq:BSymmGeodesics}
	\gamma_u(t) 
		= 0\cdot
		\exp
		\bigg(t
		\begin{pmatrix}
			0 & \overline{u} \\
			u & 0
		\end{pmatrix}
		\bigg)
		= 0\cdot
		\begin{pmatrix*}[r]
			\cosh(t) & \sinh(t)\overline{u} \\
			\sinh(t) u & \cosh(t)
		\end{pmatrix*}
		= \tanh(t)u,
\end{equation}
where $u \in \Spe(1)$. Note that $\gamma_u'(0) = u$, and so $\gamma_u$ is a geodesic parametrized by arc-length.

It will be useful to describe, with respect to the $\Spe(1,1)$-action, all the orbits for the one-parameter subgroups associated to $\frakm_\B$. This is done in the next result, which follows from Proposition~\ref{prop:expSp(1,1)} and a straightforward computation.

\begin{proposition}\label{prop:HyperOrbits}
	For every $u_0 \in \Spe(1)$, the one parameter subgroup of $\Spe(1,1)$ defined by 
	\[
		t \mapsto \exp 
		\bigg(t
		\begin{pmatrix}
			0 & \overline{u}_0 \\
			u_0 & 0
		\end{pmatrix}
		\bigg)
	\]
	has orbit passing through $a \in \B$ given by the set of points of the form
	\[
		(1 + \tanh(t)a \overline{u}_0)^{-1}
			(a + \tanh(t) u_0),
	\]
	where $t \in \R$.
\end{proposition}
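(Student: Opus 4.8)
The plan is to combine the explicit description of the Lie group exponential on $\frakm_\B$ from Proposition~\ref{prop:expSp(1,1)} with the formula \eqref{eq:MobiusClassical} for the classical M\"obius action. First I would observe that, writing $u_0 \in \Spe(1)$ as a unit quaternion, the matrix $\begin{pmatrix} 0 & \overline{u}_0 \\ u_0 & 0 \end{pmatrix}$ lies in $\frakm_\B$, so Proposition~\ref{prop:expSp(1,1)} gives the one-parameter subgroup in question explicitly as
\[
	t \mapsto A(t) =
		\begin{pmatrix*}[r]
			\cosh(t) & \sinh(t)\overline{u}_0 \\
			\sinh(t) u_0 & \cosh(t)
		\end{pmatrix*} \in \Spe(1,1).
\]
In particular, its orbit through $a \in \B$ is $\{ a \cdot A(t) : t \in \R\}$, and it remains to evaluate this using \eqref{eq:MobiusClassical}.

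Reading off the entries of $A(t)$ in the pattern of \eqref{eq:MobiusClassical} — upper-left and lower-right both equal to $\cosh(t)$, upper-right equal to $\sinh(t)\overline{u}_0$, lower-left equal to $\sinh(t) u_0$ — the action formula gives
\[
	a \cdot A(t) = \bigl( a\,\sinh(t)\overline{u}_0 + \cosh(t) \bigr)^{-1}
		\bigl( a\,\cosh(t) + \sinh(t) u_0 \bigr).
\]
Now I would factor the real scalar $\cosh(t) > 0$ out of each of the two parenthesized expressions; since $\cosh(t)$ is real it is central in $\HH$, so it passes freely through the (noncommutative) products and through the inversion, yielding
\[
	a \cdot A(t) = \bigl( \cosh(t)(1 + \tanh(t) a \overline{u}_0) \bigr)^{-1}
		\bigl( \cosh(t)(a + \tanh(t) u_0) \bigr)
		= (1 + \tanh(t) a \overline{u}_0)^{-1}(a + \tanh(t) u_0),
\]
which is precisely the asserted form. (At $t = 0$ this is $a$, so the orbit does pass through $a$, as claimed.)

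The only point that requires an argument beyond this bookkeeping is that the element being inverted, namely $1 + \tanh(t) a\overline{u}_0$, is nonzero. This is immediate in two ways: on the one hand $A(t) \in \Spe(1,1)$ acts on $\B$ via \eqref{eq:MobiusClassical}, so the quantity $qc + d$ there never vanishes for $q \in \B$; on the other hand one can argue directly that $|\tanh(t) a\overline{u}_0| = |\tanh(t)|\,|a| < 1$ because $|a| < 1$, $|u_0| = 1$ and $|\tanh(t)| \le 1$, which forces $1 + \tanh(t) a\overline{u}_0 \ne 0$. I do not expect any genuine obstacle here: the whole proposition reduces to the hyperbolic-trigonometric form of the matrix exponential on $\frakm_\B$ together with the centrality of real scalars in $\HH$, the latter being exactly what permits pulling the common factor $\cosh(t)$ through the noncommutative expressions.
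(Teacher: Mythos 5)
Your proposal is correct and follows exactly the route the paper intends: the paper's entire justification is that the proposition ``follows from Proposition~\ref{prop:expSp(1,1)} and a straightforward computation,'' and your computation -- exponentiating to get the $\cosh/\sinh$ matrix, applying \eqref{eq:MobiusClassical}, and pulling the central real factor $\cosh(t)$ through the quaternionic inversion -- is precisely that computation, with the nonvanishing of $1+\tanh(t)a\overline{u}_0$ checked as a bonus.
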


Proposition~\ref{prop:G=KM} and Remark~\ref{rmk:G=KM} imply the next decomposition of $\Spe(1,1)$. From the discussion so far, one can say that such decomposition is induced by the geometry of the Riemannian symmetric space $(\B,\widehat{g})$. As a matter of fact, we provide a proof based on the geometric properties of $(\B,\widehat{g})$. This allows us to introduce some arguments that will be useful later on.

\begin{proposition}\label{prop:Sp(1,1)=KB}
	The map $\widehat{\pi} : \Spe(1) \times \Spe(1) \times \frakm_\B \rightarrow \Spe(1,1)$ given by
	\[
		(u,v,X) \longmapsto \diag(u,v) \exp(X),
	\]
	is a diffeomorphism.
\end{proposition}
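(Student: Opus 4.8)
The plan is to deduce this as a direct consequence of the general decomposition in Proposition~\ref{prop:G=KM}, once we verify that $(\B,\widehat{g})$ fits the hypotheses. We have already identified, via Remark~\ref{rmk:Sp(1,1)vsHelgason} and Corollary~\ref{cor:BRiemSymm}, that $(\B,\widehat{g})$ is the Riemannian symmetric space of non-compact type CII, realized as the homogeneous space $(\Spe(1)\times\Spe(1))\backslash\Spe(1,1)$ with isotropy $K = \Spe(1)\times\Spe(1) = \{\diag(u,v) : u,v\in\Spe(1)\}$. As noted after Corollary~\ref{cor:BRiemSymm}, although $\Spe(1,1)$ is only a cover of the identity component $I_0(\B,\widehat{g}) = \Spe(1,1)/\{\pm I_2\}$ of the isometry group, the version of Proposition~\ref{prop:G=KM} recorded in Remark~\ref{rmk:G=KM} applies to Riemannian symmetric pairs $(G,K)$ with $G$ any cover of $I_0$, so there is no obstruction there.

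First I would invoke Proposition~\ref{prop:G=KM} (in the right-action form of Remark~\ref{rmk:G=KM}) with $G = \Spe(1,1)$, $K = \Spe(1)\times\Spe(1)$, and $\frakm = \frakm_\B$, which we computed explicitly above as $\bigl\{\left(\begin{smallmatrix}0 & \overline{q}\\ q & 0\end{smallmatrix}\right) : q\in\HH\bigr\}$. That proposition states exactly that the map $K\times\frakm_\B\to\Spe(1,1)$, $(k,X)\mapsto k\exp(X)$, is a diffeomorphism. Second, I would rewrite this using the parametrization $K = \Spe(1)\times\Spe(1)$ by pairs $(u,v)$ via $k = \diag(u,v)$; since $(u,v)\mapsto\diag(u,v)$ is itself a diffeomorphism $\Spe(1)\times\Spe(1)\to K$, composing with the map from Proposition~\ref{prop:G=KM} shows that $\widehat{\pi}(u,v,X) = \diag(u,v)\exp(X)$ is a diffeomorphism, as claimed. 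Finally, since Proposition~\ref{prop:expSp(1,1)} identifies $\exp$ with ordinary matrix exponentiation and gives the closed form on $\frakm_\B$, one could optionally record what $\widehat{\pi}$ looks like concretely, namely $(u,v,tw)\mapsto\diag(u,v)\left(\begin{smallmatrix}\cosh(t) & \sinh(t)\overline{w}\\ \sinh(t)w & \cosh(t)\end{smallmatrix}\right)$ for $t\in\R$ and $w\in\Spe(1)$, though this is not needed for the proof.

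The main obstacle is not any computation but rather the bookkeeping of two conventions: the right-action convention adopted here (so the product is $k\exp(X)$ rather than $\exp(X)k$ as in Helgason's left-action formulation) and the cover issue ($\Spe(1,1)$ versus $I_0(\B,\widehat{g})$). Both have already been addressed in the excerpt — the first in Remark~\ref{rmk:G=KM}, the second in the paragraph following Corollary~\ref{cor:BRiemSymm} — so the proof genuinely reduces to citing Proposition~\ref{prop:G=KM} and composing with the evident diffeomorphism $\Spe(1)\times\Spe(1)\xrightarrow{\sim}K$. Accordingly I would keep the proof short: state that the claim is Proposition~\ref{prop:G=KM} applied to this symmetric pair, with $\frakm_\B$ and $K$ as identified above, and with the $\Spe(1)\times\Spe(1)$ parametrization of $K$ substituted in.
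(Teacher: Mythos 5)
Your proof is correct, and in fact the paper itself concedes the point: the sentence immediately preceding Proposition~\ref{prop:Sp(1,1)=KB} states that ``Proposition~\ref{prop:G=KM} and Remark~\ref{rmk:G=KM} imply the next decomposition of $\Spe(1,1)$.'' You have handled the two genuine bookkeeping issues (the right-action convention and the passage from $I_0(\B,\widehat{g})$ to its double cover $\Spe(1,1)$) exactly as the paper does, so nothing is missing. However, the paper deliberately does \emph{not} take this route: it gives a self-contained geometric proof instead, establishing surjectivity from completeness of $(\B,\widehat{g})$ and the geodesic formula \eqref{eq:BSymmGeodesics}, injectivity from the explicit form $0\cdot\exp(X)=\tanh(t)w$, and smoothness of $\widehat{\pi}^{-1}$ by proving directly that $f:\frakm_\B\to\B$, $f(X)=0\cdot\exp(X)$, is a diffeomorphism (via a differential computation in spherical coordinates) and then writing $\widehat{\pi}^{-1}(A)=(A\exp(-f^{-1}(0\cdot A)),f^{-1}(0\cdot A))$. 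The stated reason is that these arguments are reused later: the proof of Theorem~\ref{thm:G=K1BK2-fromsliceregular} runs the same template for the decomposition $\diag(u,1)\exp(X)v$ coming from regular M\"obius transformations, where no general symmetric-space theorem is available because the slice Riemannian metric does not make $\B$ a symmetric space. So your argument buys brevity at the cost of the machinery needed for the parallel slice-regular result; the paper's buys a reusable explicit inverse at the cost of a longer computation.
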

\begin{proof}
	First note that $\widehat{\pi}$ is clearly a smooth map.
	
	Let $A \in \Spe(1,1)$ be given, and consider $a = 0\cdot A \in \B$. Since $(\B,\widehat{g})$ is a complete Riemannian manifold, there is a geodesic passing through both $0$ and $a$. Hence, from the previous discussion on geodesics for $(\B,\widehat{g})$, there exists $X \in \frakm_\B$ such that $a = 0 \cdot \exp(X)$. We conclude that $A \exp(X)^{-1}$ fixes the origin and so, by the description of the isotropy of the origin, there exist $u,v \in \Spe(1)$ such that
	\[
		A \exp(X)^{-1} = \diag(u,v).
	\]
	This proves that $\widehat{\pi}(u,v,X) = A$. Then, $\widehat{\pi}$ is a surjective map.
	
	Let us now assume that $A = \diag(u_1,v_1) \exp(X_1) = \diag(u_2,v_2) \exp(X_2)$, where $u_j, v_j \in \Spe(1)$ and $X_j \in \frakm_\B$ ($j=1,2$). If we apply $A$ to the origin, then we obtain
	\[
		0\cdot \exp(X_1) = 0\cdot \exp(X_2) = a,
	\]
	for some $a \in \B$. We can write
	\[
		X_j = t_j
			\begin{pmatrix}
				0 & \overline{w}_j \\
				w_j & 0
			\end{pmatrix},
	\]
	for some $t_j \in \R$ and $w_j \in \Spe(1)$ ($j=1,2$). From our assumptions, and equation~\eqref{eq:BSymmGeodesics}, it follows that
	\[
		\tanh(t_1) w_1 = a = \tanh(t_2) w_2.
	\]
	By taking absolute value on this identity, we obtain $t_1 = \pm t_2$. Hence, we conclude that either $t_1 = t_2$ and $w_1 = w_2$, or $t_1 = - t_2$ and $w_1 = -w_2$. In any case, we obtain $X_1 = X_2$. Then, from both expressions for $A$ above we also conclude that $(u_1,v_1) = (u_2,v_2)$. This proves the injectivity of $\widehat{\pi}$.
	
	It remains to compute $\widehat{\pi}^{-1}$ and verify its smoothness. To achieve this, we will consider the map $f : \frakm_\B \rightarrow \B$ given by 
	\[
		f(X) = 0\cdot \exp(X) = F_{\exp(X)}(0).
	\]
	We claim that $f$ is a diffeomorphism. This map is clearly smooth and the arguments above show that it is bijective. Since the dimension of both $\frakm_\B$ and $\B$ is $4$, it is enough to prove that the differential of $f$ is injective at every point. First, we observe that \eqref{eq:BSymmGeodesics} and the remark that follows imply 
	\[
		\dif f_0
		\bigg(
		\begin{pmatrix}
			0 & \overline{q}  \\
			q & 0
		\end{pmatrix}
		\bigg) = q,
	\]
	for every $q \in \HH$, and thus the required injectivity holds at $0$. Let us now consider the map $g : (0,+\infty) \times \Spe(1) \rightarrow \frakm_\B \setminus \{0\}$ given by 
	\[
		(t,w) \mapsto 
		t \begin{pmatrix}
			0 & \overline{w} \\
			w & 0
		\end{pmatrix},
	\]
	which is clearly a diffeomorphism; in fact, it defines the spherical coordinates of $\frakm_\B \setminus \{0\}$. From the previous arguments, we have the identity $f \circ g(t,w) = \tanh(t) w$ and so, for given $(t_0, w_0) \in (0,+\infty) \times \Spe(1)$, it is easy to compute
	\[
		\dif\;(f \circ g)_{(t_0,w_0)}(t,\alpha) 
			= \cosh(t_0)^{-2} t w_0 + \tanh(t_0) \alpha,
	\]
	for every $t \in \R$ and $\alpha \in T_{w_0} \Spe(1)$. If we identify $\HH \simeq \R^4$, then $\Spe(1)$ is the unit sphere and so the tangent space $T_{w_0} \Spe(1)$ is the orthogonal complement of $w_0$. We conclude that $\dif\;(f \circ g)_{(t_0,w_0)}(t,\alpha) = 0$
	implies $(t,\alpha) = 0$. This completes the proof that $f$ is a diffeomorphism.
	
	Let us consider $A \in \Spe(1,1)$. By the surjectivity of $\widehat{\pi}$, there exist 
	$(u,v,X) \in \Spe(1) \times \Spe(1) \times \frakm_\B$ such that $A = \widehat{\pi}(u,v,X)$, i.e.~we have
	\[
		A = \diag(u,v) \exp(X).
	\]
	Applying this expression to $0 \in \B$ we obtain $0\cdot A = 0\cdot \exp(X)$. From the definition of $f$ we conclude that $X = f^{-1}(0 \cdot A)$, and we also have
	\[
		\diag(u,v) = A \exp(-f^{-1}(0\cdot A)).
	\]
	If we identify a pair $(u,v)$ with the matrix $\diag(u,v)$, then the previous identities yield 
	\[
		\widehat{\pi}^{-1}(A) = (A \exp(-f^{-1}(0\cdot A)), f^{-1}(0\cdot A)),
	\]
	for every $A \in \Spe(1,1)$. Since the action of $\Spe(1,1)$ on $\B$, the map $f^{-1}$ and the product of matrices are all smooth, it follows that $\widehat{\pi}^{-1}$ is smooth as well.	
\end{proof}

To finish this subsection, we now apply Propositions~\ref{prop:RiemSymmProperties} and \ref{prop:RiemSymmCentralizers}, together with Remark~\ref{rmk:RiemSymmCentralizers}, to the Riemannian symmetric space $(\B,\widehat{g})$ (see also~\eqref{eq:BasSymmQuotient}). The conclusion of this result is two-fold: to explicitly write $\B$ as quotient of $\Spe(1,1)$ coming from the action by classical M\"obius transformations and to describe $I_0(\B,\widehat{g})$ in terms of centralizers in the group $\Spe(1,1)$.

\begin{theorem}\label{thm:RiemSymmBCentralizers}
	The right $\Spe(1,1)$-action on $\B$ given by \eqref{eq:MobiusClassical} is isometric on $(\B,\widehat{g})$, yields the identity component isometry group $I_0(\B,\widehat{g})$ and provides the homogeneous Riemannian symmetric space realization given by
	\begin{align}\label{eq:BasSymmQuotientProposition}
		(\Spe(1) \times \Spe(1)) \backslash \Spe(1,1) &\xlongrightarrow{\sim} \B \\
		(\Spe(1) \times \Spe(1)) A &\longmapsto F_A(0). \notag
	\end{align}
	Furthermore, the group $I_0(\B,\widehat{g})$ is obtained from the symmetries of $\Spe(1,1)$, that descend in the quotient to $\B$, from the following list 
	\begin{enumerate}
		\item $\{R_A : A \in C_{\Spe(1,1)}(\{I_2\}) \}$, and
		\item $\{L_A : A \in C_{\Spe(1,1)}(\Spe(1) \times \Spe(1)) \}$,
	\end{enumerate}
	where we have $C_{\Spe(1,1)}(\{I_2\}) = \Spe(1,1)$ and $C_{\Spe(1,1)}(\Spe(1) \times \Spe(1)) = \{\pm I_2\}$.
\end{theorem}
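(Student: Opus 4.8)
The plan is to recognize that Theorem~\ref{thm:RiemSymmBCentralizers} is essentially a bookkeeping statement: it packages together facts that have already been established in this subsection, specializing the general machinery of subsection~\ref{subsec:RiemSymmSpaces} to the concrete situation at hand. So the proof will be a verification that all the hypotheses of the relevant general propositions are met, followed by transcribing their conclusions with the explicit data of $\Spe(1,1)$ and $\Spe(1)\times\Spe(1)$.

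First I would recall that $(\B,\widehat{g})$ is a Riemannian symmetric space of non-compact type (type CII), as established in Corollary~\ref{cor:BRiemSymm} and the discussion following it, and that its connected isometry group may be taken to be $\Spe(1,1)$ itself via the realization \eqref{eq:BasSymmQuotient} (here one invokes the remark that finiteness of the center $\{\pm I_2\}$ allows replacing $I_0(\B,\widehat{g}) \simeq \Spe(1,1)/\{\pm I_2\}$ by $\Spe(1,1)$, cf.~\cite[Chapter~IV,~Proposition~3.4]{Helgason}). The transitivity of the $\Spe(1,1)$-action on $\B$ and the identification of the isotropy subgroup of $0$ with $\Spe(1)\times\Spe(1)$ were already noted just after \eqref{eq:PoncaireMetric}, and these two facts give precisely the $G$-equivariant diffeomorphism of Proposition~\ref{prop:RiemSymmProperties}(1), which is the content of \eqref{eq:BasSymmQuotientProposition} once one spells out that the orbit map sends $(\Spe(1)\times\Spe(1))A$ to $0\cdot A = F_A(0)$.

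Next I would apply Proposition~\ref{prop:RiemSymmCentralizers} and Remark~\ref{rmk:RiemSymmCentralizers} with $G = \Spe(1,1)$ and $K = \Spe(1)\times\Spe(1)$. The hypotheses — $G$ connected, non-compact, semisimple, without compact factors, with finite center — all hold for $\Spe(1,1)$ (the center being $\{\pm I_2\}$), so the proposition yields that $I_0(\B,\widehat{g})$ is exactly the group of diffeomorphisms of $\B$ induced by right translations $R_A$ with $A \in G = C_{\Spe(1,1)}(\{I_2\})$, equivalently by left translations $L_A$ with $A \in C_{\Spe(1,1)}(\Spe(1)\times\Spe(1))$. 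The only remaining computation is to identify $C_{\Spe(1,1)}(\Spe(1)\times\Spe(1))$ explicitly: an element $\diag(u,v)$ with $u,v\in\Spe(1)$, and more generally any $A = \begin{pmatrix} a & c \\ b & d\end{pmatrix}\in\Spe(1,1)$, commutes with all of $\Spe(1)\times\Spe(1)$ iff $a,d$ are central in $\HH$ (so real) and $b=c=0$; combined with the defining relation $A^*I_{1,1}A = I_{1,1}$ this forces $a = \pm1$, $d=\pm1$ with matching signs, giving $\{\pm I_2\}$. (Alternatively, as Remark~\ref{rmk:RiemSymmCentralizers} indicates, $C_G(K)$ equals the center of $G$ for an orthogonal symmetric pair of non-compact type, which is $\{\pm I_2\}$ here.)

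I do not expect a serious obstacle; the one point requiring a little care is making sure the right-action conventions match — the paper works with isometric right actions whereas \cite{Helgason} uses left actions, so one must check that the anti-homomorphism $A\mapsto F_A$ correctly converts the left-translation statements of Proposition~\ref{prop:RiemSymmCentralizers} into the right-action statement \eqref{eq:BasSymmQuotientProposition}. This is exactly the reversal already flagged in Remark~\ref{rmk:G=KM} and Remark~\ref{rmk:RiemSymmCentralizers}, so it amounts to tracking which side the group acts on; no new idea is needed. The explicit computation of the centralizer is routine linear algebra over $\HH$, and everything else is quotation of earlier results.
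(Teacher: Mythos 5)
Your approach coincides with the paper's: Theorem~\ref{thm:RiemSymmBCentralizers} is presented there exactly as you describe, namely as a direct specialization of Propositions~\ref{prop:RiemSymmProperties} and \ref{prop:RiemSymmCentralizers} and Remark~\ref{rmk:RiemSymmCentralizers} to $G=\Spe(1,1)$, $K=\Spe(1)\times\Spe(1)$, using the transitivity of the action \eqref{eq:MobiusClassical}, the isotropy computation at $0$, Corollary~\ref{cor:BRiemSymm}, and the identification $I_0(\B,\widehat{g})\simeq\Spe(1,1)/\{\pm I_2\}$; the paper supplies no argument beyond these citations, so your verification of the hypotheses and your remark about the left/right action conventions is all that is needed.

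There is, however, one step in your write-up that does not hold as stated: in the centralizer computation you deduce $a=\pm1$, $d=\pm1$ ``with matching signs,'' but the relation $A^*I_{1,1}A=I_{1,1}$ applied to $A=\diag(a,d)$ with $a,d\in\R$ only yields $a^2=d^2=1$ and imposes no relation between the two signs. Indeed $I_{1,1}=\diag(1,-1)$ lies in $\Spe(1,1)$ and commutes with every $\diag(u,v)$, so the literal centralizer is $\{\diag(\epsilon_1,\epsilon_2):\epsilon_1,\epsilon_2=\pm1\}$, a group of order $4$ rather than $\{\pm I_2\}$. The alternative route you cite from Remark~\ref{rmk:RiemSymmCentralizers} does not repair this: since $C_G(K)\subseteq N_G(K)=K$, one gets $C_G(K)=Z(K)$, and $Z(\Spe(1)\times\Spe(1))=\{\pm1\}\times\{\pm1\}$ again has order $4$, whereas $Z(\Spe(1,1))=\{\pm I_2\}$. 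This discrepancy is inherited from the paper's own statement and is harmless for the substantive conclusion, because the two extra elements $\pm I_{1,1}$ lie in $K$ and hence induce the identity on $K\backslash G\simeq\B$; but as written your ``matching signs'' deduction is a gap, and you should either correct the centralizer to the order-$4$ group or note explicitly why the extra elements contribute nothing.
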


\section{Slice regularity and Riemannian geometry on $\B$}\label{sec:SliceRiemB}

\subsection{Slice regular functions on $\B$}\label{subsec:sliceregular}
We recall and discuss some properties of slice regular functions of a quaternionic variable defined on $\B$. We refer to \cite{ColomboSabadiniStruppaFunctionalBook,GentiliStoppatoStruppa2ndEd} for further details.

A function $f : \B \rightarrow \HH$ is called slice regular if it is smooth and satisfies
\[
	\frac{1}{2} \bigg(\frac{\partial}{\partial x} 
		+ I \frac{\partial}{\partial y}\bigg) f (x+yI) = 0,
\]
for every $x,y \in \R$ and $I \in \Sbb$ such that $x + y I \in \B$. From now on, $\Sbb$ will denote the $2$-dimensional sphere of imaginary units in $\HH$. The previous condition is precisely the requirement that $f$ be holomorphic when restricted to the complex slice $\C_I = \R \oplus \R I$, for every $I \in \Sbb$.

Every slice regular function $f$ on $\B$ admits a power series representation 
\[
	f(q) = \sum_{n=0}^{+\infty} q^n a_n,
\]
where $a_n \in \HH$, for all $n$, that converges uniformly on compact subsets of $\B$. Furthermore, this property completely characterizes slice regular functions on $\B$.

The family of slice regular functions fails to be closed under pointwise product. However, it is possible to define a slice regular product, also called $*$-product, that thus yields a real algebra structure to such family. In terms of power series as above, the $*$-product is defined, for slice regular functions on $\B$, by the formula
\[
	\bigg(\sum_{n=0}^{+\infty} q^n a_n \bigg) * 
		\bigg(\sum_{n=0}^{+\infty} q^n b_n \bigg) =
			\sum_{n=0}^{+\infty} q^n \sum_{k=0}^n a_k b_{n-k}. 
\]
One can further introduce the regular conjugate $f^c$ and the symmetrization $f^s$ of $f$ as above with the following formulas
\[
	f^c(q) = \sum_{n=0}^{+\infty} q^n \overline{a}_n, \quad
	f^s = f * f^c = f^c * f,
\]
respectively. With this notation, we are able to define
\[
	f^{-*} = \frac{1}{f^s} f^c,
\]
which turns out be the inverse of $f$ with respect to the $*$-product. More precisely, we have
\[
	f * f^{-*} = f^{-*} * f = 1
\]
the constant map $1$, on the complement of a suitable zero set.

For every $a,b \in \HH$, we consider the regular map $\ell_{a,b}(q) = qa + b$, where $q \in \B$. Then, for every $A \in \Spe(1,1)$ with matrix entries given by
\[
	A = \begin{pmatrix}
		a & b \\
		c & d
	\end{pmatrix},
\]
the regular M\"obius transformation associated to $A$ is the map $\B \rightarrow \HH$ defined by
\[
	\cF_A = \ell_{c,d}^{-*} * \ell_{a,b}.
\]
It is well known that $\cF_A$ maps $\B$ onto $\B$ diffeomorphically. Every such map $\cF_A$ will be called a \textbf{regular M\"obius transformation} and the family of all these maps is denoted by $\cM(\B)$.

It is natural to ask about the conditions needed for classical M\"obius transformation, elements of $\Mbb(\B)$, to be regular. This is answered in the next result.

\begin{proposition}\label{prop:MobiusClassicalRegular}
	The intersection $\Mbb(\B) \cap \cM(\B)$ consists of the transformations $F_{a,u} : \B \rightarrow \B$ defined by
	\[
		F_{a,u}(q) = (1 - qa)^{-1} (q - a) u,
	\]
	where $a \in (-1,1)$ and $u \in \Spe(1)$.
\end{proposition}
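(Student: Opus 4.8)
The plan is to compare the two families of maps directly. On one side, a classical Möbius transformation $F_A$ with $A = \begin{pmatrix} a & c \\ b & d \end{pmatrix} \in \Spe(1,1)$ is the map $q \mapsto (qc+d)^{-1}(qa+b)$, and the defining relations $A^*I_{1,1}A = I_{1,1}$ give the standard constraints on $a,b,c,d$ (in particular $|a|^2 - |c|^2 = 1$, $|d|^2 - |b|^2 = 1$, $\overline{a}b = \overline{c}d$, so that after normalization such a transformation can be written as $q \mapsto (1-q\overline{\beta})^{-1}(q-\alpha)u$ up to left/right unit factors, the usual normal form). On the other side, a regular Möbius transformation $\cF_A = \ell_{c,d}^{-*} * \ell_{a,b}$ is built from the $*$-product rather than pointwise composition. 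So first I would write down the normal forms for a generic element of each family and then ask when they can agree as functions $\B \to \B$.

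The key computational step is to expand $\cF_A$ explicitly. Writing $\ell_{c,d}(q) = qc + d$ and $\ell_{c,d}^s = \ell_{c,d} * \ell_{c,d}^c$, one computes $\ell_{c,d}^c(q) = q\overline{c} + \overline{d}$ and $\ell_{c,d}^s(q) = |c|^2 q^2 + (\overline{c}d + \overline{d}c\text{-type terms})q + |d|^2$ — a polynomial with \emph{real} coefficients, hence a function that commutes with everything and whose reciprocal is unambiguous. Then $\cF_A(q) = (\ell_{c,d}^s(q))^{-1}\,\big(\ell_{c,d}^c * \ell_{a,b}\big)(q)$, and the numerator $\ell_{c,d}^c * \ell_{a,b}$ is a degree-$2$ polynomial in $q$ whose coefficients I can read off from the $*$-product formula. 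Comparing this with a classical Möbius transformation, which is a \emph{ratio of two degree-$1$ polynomials}, forces the degree-$2$ terms to cancel: one gets that the numerator of $\cF_A$ must be divisible (on the left, in the appropriate sense) by the real polynomial $\ell_{c,d}^s$. Matching coefficients, this divisibility pins down the relations among $a,b,c,d$ that make $\cF_A$ classical.

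I expect the main obstacle to be bookkeeping in this coefficient comparison: one must be careful that the $*$-product is noncommutative, that $\ell_{c,d}^s$ has real coefficients (so left and right division by it agree), and that the $\Spe(1,1)$-relations are used at the right moment to simplify. Concretely, after imposing both "$\cF_A$ is classical" and "$A \in \Spe(1,1)$," the conclusion should be that $c$ and $b$ are real multiples of conjugates of $a$ and $d$ in such a way that, writing $a = -\alpha$ up to scaling, one recovers $F_{a,u}(q) = (1-qa)^{-1}(q-a)u$ with $a \in (-1,1)$ (the parameter becomes real precisely because the degree-$2$ cancellation forces the "$\beta$" in the normal form $(1-q\overline{\beta})^{-1}(q-\alpha)u$ to coincide with $\alpha$ and be real) and $u \in \Spe(1)$ the residual right unit factor. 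The converse direction is then routine: for $a \in (-1,1)$ one checks directly that $\ell_{c,d}$ with $c = -\overline{a} = -a$ real makes $\ell_{c,d}^{-*} = \ell_{c,d}^{-1}$ pointwise, so the $*$-product collapses to ordinary composition and $\cF_A = F_{a,u}$, which visibly lies in $\Mbb(\B)$.
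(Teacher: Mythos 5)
Your proposal takes a genuinely different route from the paper, and the route as sketched has a real gap at its central step. The paper argues from the classical side: it takes the Bisi--Stoppato normal form $F(q) = v(1-q\overline{a})^{-1}(q-a)u$ of a classical M\"obius self-map of $\B$, expands it as a power series, and imposes the slice-regularity condition $\big(\partial_x + I\partial_y\big)F = 0$ along the real axis; the order-$0$ coefficient forces $v = -IvI$ for all $I \in \Sbb$ (hence $v = \pm 1$), and the order-$1$ coefficient forces $a \in \R$. Because the regularity condition is a linear differential constraint, only the first two Taylor coefficients are needed and no comparison of rational expressions ever arises. You instead start from the regular side, writing $\cF_A = (\ell_{c,d}^s)^{-1}\,(\ell_{c,d}^c * \ell_{a,b})$ as a degree-$2$-over-degree-$2$ rational expression and asking when it reduces to a degree-$1$-over-degree-$1$ classical one.

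The gap is in the reduction step. First, as literally stated, ``the numerator must be divisible by the real polynomial $\ell_{c,d}^s$'' cannot be the right condition: both have degree $2$, so that divisibility would make $\cF_A$ constant. What you actually need is that the numerator and $\ell_{c,d}^s$ share a common left factor of degree $1$ which cancels --- and establishing that equality of the \emph{functions} $\cF_A = F_B$ on $\B$ forces such an algebraic cancellation is precisely the hard point. Clearing denominators in $(\ell^s(q))^{-1}N(q) = (qc'+d')^{-1}(qa'+b')$ does not produce a clean polynomial identity, because $\ell^s(q)^{-1}$ lies in the slice $\C_I$ of $q$ and does not commute with $qc'+d'$; and quaternionic polynomial factorization is itself delicate (real-coefficient quadratics like $\ell^s$ typically vanish on whole spheres, factorizations are non-unique, and distinct reduced expressions can represent the same function, e.g.\ $(q-i)^{-1}(q-i) = (q-j)^{-1}(q-j)$). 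So this is not ``bookkeeping'': it is the main content, and your sketch does not supply it. You also never address why the left unit factor $v$ in the classical normal form must be $\pm1$, which is half of the paper's conclusion. Your converse direction is fine and matches the paper: when $c,d$ are real, $\ell_{c,d}^s = \ell_{c,d}^2$ pointwise, so $\ell_{c,d}^{-*} = \ell_{c,d}^{-1}$ and the $*$-quotient collapses to the pointwise one. If you want to salvage the forward direction along your lines, the efficient fix is to invoke Stoppato's normal form $\cF(q) = (1-q\overline{a})^{-*}*(q-a)u$ for regular M\"obius transformations and then still test slice regularity (or cancellation) against that single parameter $a$; otherwise the paper's PDE argument is the cleaner path.
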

\begin{proof}
	For $a \in (-1,1)$ and $u \in \Spe(1)$, it is well known that the classical M\"obius transformation $F_{a,u}$ in the statement is regular because it can be rewritten as
	\[
		F_{a,u} = R_u \circ (\ell_{-a,1}^{-*}*\ell_{1,-a}),
	\]
	where $R_u(q) = qu$. It follows that $F_{a,u}$ belongs to $\cM(\B)$ as well.
	
	Let us now consider a slice regular map $F$ on $\B$ that also belongs to $\Mbb(\B)$. Then, there exist $a \in \B$ and $u,v \in \Spe(1)$ such that (see~\cite{BisiStoppatoMobius})
	\[
		F(q) = v (1 - q\overline{a})^{-1} (q - a)u,
	\]
	for every $q \in \B$. Hence, with the use of a geometric series and some manipulations we conclude that
	\begin{align*}
		F(q) &= v\sum_{n=0}^{+\infty}  (q\overline{a})^n (q - a)u 
			= -vau + v\sum_{n=0}^{+\infty} (q\overline{a})^n q u
				- v \sum_{n=1}^{+\infty} (q \overline{a})^n a u \\
			&= -vau + v \sum_{n=0}^{+\infty}(q\overline{a})^n q (1 - |a|^2)u.
	\end{align*}
	By adding $vau$ and then multiplying on the right by $(1 - |a|^2)^{-1} \overline{u}$, we conclude that the function defined by
	\[
		F_1(q) = v \sum_{n=0}^{+\infty}(q\overline{a})^n q
	\]
	is slice regular. For every $x,y \in \R$ and $I \in \Sbb$ such that $x + yI \in \B$, using a Leibniz rule with respect to the product in $\HH$, it is straightforward to compute the following partial derivatives
	\begin{align*}
		\frac{\partial F_1}{\partial x}&(x+yI) = \\
			=&\; v +
			v \sum_{n=1}^{+\infty} \bigg(
				\sum_{k=0}^{n-1}\big((x+yI)\overline{a}\big)^k \overline{a}
					\big((x+yI)\overline{a}\big)^{n-1-k}(x+yI) \\
					&+
					\big((x+yI)\overline{a}\big)^n
					\bigg) \\
		\frac{\partial F_1}{\partial y}&(x+yI) = \\
			=&\; vI +
			v \sum_{n=1}^{+\infty} \bigg(
				\sum_{k=0}^{n-1}\big((x+yI)\overline{a}\big)^k I\overline{a}
					\big((x+yI)\overline{a}\big)^{n-1-k}(x+yI) \\
					&+
					\big((x+yI)\overline{a}\big)^n I
					\bigg).
	\end{align*}
	Taking $y = 0$, we conclude that, for every $x \in (-1,1)$ and $I \in \Sbb$, we have
	\begin{align*}
		\frac{\partial F_1}{\partial x}(x) 
			&= v +
				v \sum_{n=1}^{+\infty} (n+1) x^n \overline{a}^n \\
		I\frac{\partial F_1}{\partial y}(x+yI) 
			&= IvI +
				Iv \sum_{n=1}^{+\infty} x^n
				\sum_{k=0}^{n}\overline{a}^k I
					\overline{a}^{n-k}.		
	\end{align*}
	The slice regularity of $F_1$ implies, for $x = 0$, that $v = -IvI$ for every $I \in \Sbb$ and so that $v \in \R$. Since $v \in \Spe(1)$, we conclude that $v = \pm 1$. For this value of $v$, we use again the slice regularity condition of $F_1$ to conclude that the order $1$ coefficient of the sum of the two series above vanishes. This yields the identity
	\[
		2 \overline{a} = -I(I\overline{a} + \overline{a}I),
	\]
	for every $I \in \Sbb$. Hence, $a \in \R$ and so $a \in (-1,1)$ because $a \in \B$. We conclude that $F = F_{a,\pm u}$, with the required conditions on $a$ and $\pm u$. This completes the proof.
\end{proof}

The maps of $\B$ belonging to $\Mbb(\B) \cap \cM(\B)$ can be described as classical M\"obius transformations obtained from subgroups of $\Spe(1,1)$. Let us consider the following subgroups
\begin{align*}
	\OO(1,1) &= \Spe(1,1) \cap M_2(\R) \\
	\SO(1,1) &= \{ A \in \OO(1,1) : \det(A) = 1 \} \\
	\SO_0(1,1) &= \biggl\{
	\begin{pmatrix*}[r]
		\cosh(t) & \sinh(t) \\
		\sinh(t) & \cosh(t) 
	\end{pmatrix*} : t \in \R
	\biggr\}.
\end{align*}
It is clear that $\SO_0(1,1) \subset \SO(1,1) \subset \OO(1,1)$. It follows from a simple inspection (see also \cite[Chapter~X,~Lemma~2.4]{Helgason}) that $\SO_0(1,1)$ is the identity component of both $\SO(1,1)$ and $\OO(1,1)$. Furthermore, we have the following subgroup class decompositions
\begin{align*}
	\SO(1,1) &= \SO_0(1,1) \bigcup (-\SO_0(1,1)), \\
	\OO(1,1) &= \SO(1,1) \bigcup \SO(1,1)I_{1,1}, \\
				&= \SO_0(1,1) \bigcup \SO_0(1,1)I_{1,1}
					\bigcup 
					(-\SO_0(1,1)) \bigcup (-\SO_0(1,1)I_{1,1})
\end{align*}
We will consider $\Spe(1)$ acting on the right by the transformations
\[
	q\cdot u = qu,
\]
for every $q \in \B$ and $u \in \Spe(1)$. After a straightforward computation of the $\SO_0(1,1)$-action by classical M\"obius transformations, Proposition~\ref{prop:MobiusClassicalRegular} implies that the actions of $\SO_0(1,1)$ and $\Spe(1)$ yield all the regular classical M\"obius transformations. This proves the next result.

\begin{corollary}\label{cor:MobiusClassicalRegular}
	The set $\Mbb(\B) \cap \cM(\B)$ is the collection of transformations given by the following actions.
	\begin{enumerate}
		\item The right $\SO_0(1,1)$-action on $\B$ by classical M\"obius transformations.
		\item The right $\Spe(1)$-action on $\B$ given by $q\cdot u = qu$.
	\end{enumerate}
\end{corollary}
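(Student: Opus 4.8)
The plan is to reduce Corollary~\ref{cor:MobiusClassicalRegular} to Proposition~\ref{prop:MobiusClassicalRegular}, which already identifies the set $\Mbb(\B) \cap \cM(\B)$ with the family $\{F_{a,u} : a \in (-1,1),\ u \in \Spe(1)\}$, where $F_{a,u}(q) = (1-qa)^{-1}(q-a)u$. So all that remains is to recognize this family as exactly the transformations produced by the two actions in the statement. First I would compute, for a matrix $A = \begin{pmatrix*}[r] \cosh(t) & \sinh(t) \\ \sinh(t) & \cosh(t) \end{pmatrix*} \in \SO_0(1,1)$, the corresponding classical M\"obius transformation via \eqref{eq:MobiusClassical}: $q \cdot A = (q\sinh(t) + \cosh(t))^{-1}(q\cosh(t) + \sinh(t))$. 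Dividing numerator and denominator by $\cosh(t)$ and setting $a = -\tanh(t)$ (so that $a$ ranges over all of $(-1,1)$ as $t$ ranges over $\R$) gives $q \cdot A = (1 - qa)^{-1}(q - a)$, which is precisely $F_{a,1}$.

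Next I would observe that composing this with the right $\Spe(1)$-action $q \mapsto qu$ yields $(1-qa)^{-1}(q-a)u = F_{a,u}$, so every transformation in the family of Proposition~\ref{prop:MobiusClassicalRegular} arises from the combined actions of $\SO_0(1,1)$ and $\Spe(1)$. Conversely, each such composite is by construction a classical M\"obius transformation (since the $\SO_0(1,1)$-action is by classical M\"obius transformations, and $q \mapsto qu$ is the classical M\"obius transformation attached to $\diag(u, 1) \in \Spe(1,1)$, or one can just note directly that it is the restriction of a classical M\"obius transformation), and it equals $F_{a,u}$ for suitable $a, u$, hence is regular by Proposition~\ref{prop:MobiusClassicalRegular}. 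Therefore the collection of transformations produced by the two actions coincides with $\Mbb(\B) \cap \cM(\B)$.

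I do not expect a serious obstacle here; the only point requiring a little care is the bookkeeping in matching parameters, namely verifying that $a = -\tanh(t)$ sweeps out $(-1,1)$ bijectively and that the sign conventions in \eqref{eq:MobiusClassical} (where the action reads $q \cdot A = (qc+d)^{-1}(qa+b)$ with $A = \begin{pmatrix} a & c \\ b & d \end{pmatrix}$) are applied consistently — note in particular the transposed placement of entries relative to the naive reading. One should also confirm that $\Spe(1)$ indeed acts by elements of $\Mbb(\B)$, which is immediate since $q \mapsto qu = F_{\diag(u,1)}(q)$ and $\diag(u,1) \in \Spe(1,1)$. With these checks in place the proof is a direct combination of the explicit $\SO_0(1,1)$ computation with Proposition~\ref{prop:MobiusClassicalRegular}, exactly as the paragraph preceding the statement indicates.
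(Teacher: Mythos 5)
Your proposal is correct and follows essentially the same route as the paper, which likewise derives the corollary by explicitly computing the classical M\"obius transformation attached to $H(t)\in\SO_0(1,1)$ (obtaining $F_{-\tanh(t),1}$), composing with $q\mapsto qu$, and invoking Proposition~\ref{prop:MobiusClassicalRegular}. The parameter check $a=-\tanh(t)$ sweeping $(-1,1)$ and the reading of the entry conventions in \eqref{eq:MobiusClassical} are exactly the ``straightforward computation'' the paper leaves implicit.
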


\subsection{The slice Riemannian metric on $\B$}\label{subsec:sliceRiem}
We have introduced in \cite{QB-SliceKahler} geometric structures on the unit ball $\B$ that are closely related to regular M\"obius transformations on this domain. We recall the basic definitions and properties, while referring to \cite{QB-SliceKahler} for further details. 

For every $q \in \B$ let us consider the form given by
\begin{align*}
	h_q : \HH \times \HH &\rightarrow \HH  \\
	h_q(\alpha,\beta) &= 
		\frac{(1-q^2)^{-1} (\alpha - \alpha q\alpha) 
			(\overline{\beta - \beta q \beta}) (1-\overline{q}^2)^{-1}}{(1-|q|^2)^2}
\end{align*}
where $q \in \B$. Since the tangent space of $\B$ at every point is $\HH$, this yields a smooth tensor $h$. Furthermore, $h$ is an $\HH$-valued, $\R$-bilinear, Hermitian symmetric, positive definite form. In particular, the following properties are satisfied (see~\cite{QB-SliceKahler})
\begin{enumerate}
	\item ($\HH$-Hermitian symmetry): at every $q \in \B$ and for every $\alpha, \beta \in \HH$: $h_q(\alpha,\beta) = \overline{h_q(\beta,\alpha)}$.
	\item (positive definiteness): at every $q \in \B$ and for every $\alpha \in \HH$: $h_q(\alpha,\alpha) \geq 0$ and it vanishes if and only if $\alpha = 0$.
\end{enumerate}
We call $h$ the slice Hermitian metric on $\B$.

The properties just stated for $h$ yield two additional geometric structures on $\B$: the slice Riemannian metric $g$ on $\B$ and the slice K\"ahler form $\omega$ defined as the real and (quaternionic) imaginary parts of $h$, respectively. Our main reason to consider these geometric structures is that they relate nicely to regular M\"obius transformations. In particular, they satisfy the following properties.
\begin{enumerate}
	\item (Slice invariance): for every $u \in \Spe(1)$, the map $C_u : \B \rightarrow \B$ given by $C_u(q) = \overline{u} q u$ is an isometry of $g$.
	\item (Slice hyperbolicity): for every $I \in \Sbb$, the restriction of $g$ to the complex slice $\C_I$ (both on the base point and the tangent vectors) yields the usual complex hyperbolic metric on the complex unit disk $\C_I \cap \B$.
	\item (Slice regular invariance): for every $q \in \B$ and $\cF \in \cM(\B)$ such that $\cF(q) = 0$ we have
	\[
		g_q(\alpha,\beta) = g_0(\dif \cF_q(\alpha), \dif \cF_q(\beta)) 
			= \dif \cF_q(\alpha) \overline{\dif \cF_q(\beta)},
	\]
	for every $\alpha, \beta \in \HH$.
\end{enumerate}
These claims have been proved in \cite{QB-SliceKahler}, where we have also obtained the formula
\[
	g_q(\alpha,\beta) = 
		\frac{\re((\alpha - \alpha q\alpha) 
			(\overline{\beta - \beta q \beta})}
			{|1-q^2|^2 (1-|q|^2)^2},
\]
which holds for every $q \in \B$ and $\alpha, \beta \in \HH$. As a matter of fact, this expression implies all three properties stated above for $g$. It also shows that our slice Riemannian metric $g$ is precisely the Riemannian metric obtained in \cite{ArcozziSarfatti} from the pseudo-hyperbolic distance induced by a quaternionic Hardy space in $\B$. For further details on this claim, we refer to \cite{QB-SliceKahler}. Hence, we will freely use in the rest of this work that the slice Riemannian metric $g$ on $\B$ satisfies the properties established in \cite{ArcozziSarfatti}. In particular, the next result is a consequence of that reference.

\begin{proposition}[Arcozzi-Sarfatti]\label{prop:Arcozzi-Sarfatti}
	The slice Riemannian metric $g$ on $\B$ satisfies the following properties.
	\begin{itemize}
		\item The complex unit disk $\C_I \cap \B$ is totally geodesic for every $I \in \Sbb$. In particular, the rays in $\B$ through the origin are geodesics.
		\item The isometry group of $(\B,g)$ is generated by the following maps.
		\begin{enumerate}
			\item The signed slice symmetries given by the left $\{\pm 1\} \times \Spe(1)$-action on $\B$ defined by $(\epsilon, u) \cdot q = \epsilon u q\overline{u}$, where $\epsilon \in \{\pm 1\}$ and $u \in \Spe(1)$.
			\item The (regular and) classical M\"obius transformations given by the right $\SO_0(1,1)$-action.
			\item The involution $q \mapsto \overline{q}$, whose fixed point set is precisely the real axis.
		\end{enumerate}
	\end{itemize}
\end{proposition}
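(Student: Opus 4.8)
The plan is to deduce everything from the identification, established in \cite{QB-SliceKahler}, of the slice Riemannian metric $g$ with the Riemannian metric constructed by Arcozzi and Sarfatti in \cite{ArcozziSarfatti} out of the pseudo-hyperbolic distance of a quaternionic Hardy space on $\B$. Granting this identification, the first bullet is immediate from \cite{ArcozziSarfatti}: that metric restricts to the Poincar\'e metric of the disk on each slice $\C_I\cap\B$, each such slice is totally geodesic in $\B$, and a ray through the origin lies in some slice $\C_I$ where it is a diameter of the disk $\C_I\cap\B$, hence a geodesic of $(\B,g)$.

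For the second bullet I would argue in two steps. \emph{Step 1: the listed maps are isometries.} For family (1), the $\Spe(1)$-part is the slice invariance property already recorded for $g$, while $q\mapsto -q$ is the member with $\epsilon=-1$, $u=1$; alternatively both follow at once from the explicit formula for $g_q$ displayed above, since $\re$, $|1-q^2|$ and $1-|q|^2$ are insensitive to the substitution $q\mapsto\epsilon u q\overline u$. For family (3), one checks directly from that same formula that the substitution $(q,\alpha,\beta)\mapsto(\overline q,\overline\alpha,\overline\beta)$ leaves $g_q(\alpha,\beta)$ unchanged, using $\re(\overline w)=\re(w)$ and $\overline{q}^{\,2}=\overline{q^{2}}$, and that the fixed point set is exactly the real axis. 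For family (2), Corollary~\ref{cor:MobiusClassicalRegular} identifies the right $\SO_0(1,1)$-action with (regular) classical M\"obius transformations, and these are isometries of $g$ by the slice regular invariance of $g$ proved in \cite{QB-SliceKahler} (the pointwise statement is promoted to a global one by composing with slice-preserving motions). \emph{Step 2: these maps generate the whole isometry group.} Here I would transport the computation of $\mathrm{Isom}(\B,g)$ from \cite{ArcozziSarfatti} and match their generating set with the families (1)--(3); this is essentially bookkeeping, comparing parametrizations.

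The only genuine content beyond citation is Step 2, and the point that needs care is precisely this matching: \cite{ArcozziSarfatti} organizes the isometry group somewhat differently, so one must verify that the orientation-reversing involution $q\mapsto\overline q$ accounts exactly for the index-two extension, and that the orientation-preserving part is reproduced by the $\{\pm1\}\times\Spe(1)$-action together with the $\SO_0(1,1)$-action, with the overlap of these two actions correctly accounted for. Everything else — smoothness of the maps, the isometry checks of Step~1, and the totally geodesic assertion — is routine once the two metrics have been identified.
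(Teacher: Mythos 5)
Your proposal follows the same route as the paper: both bullets are imported from \cite{ArcozziSarfatti} (their Lemma~4.4 for the totally geodesic statement and their Subsection~4.1 for the isometry group), with the only genuine work being the matching of their generating set to families (1)--(3) --- which the paper dispatches by observing that the isometry group of $\Sbb$ is realized by the $\{\pm 1\}\times\Spe(1)$-action. Your Step~1 re-verification that the listed maps are isometries is extra care the paper omits here (having recorded those invariance properties earlier), but the argument is essentially identical.
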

\begin{proof}
	The first part follows from \cite[Lemma~4.4]{ArcozziSarfatti}. For the second claim we use \cite[Subsection~4.1]{ArcozziSarfatti}. We observe that the isometry group of $\Sbb$ is given precisely by the right $\{\pm 1\} \times \Spe(1)$-action from our statement.
\end{proof}

We now explicitly describe the isometry group of $(\B,g)$. First, we will consider the matrices
\[
	H(t) = 
		\begin{pmatrix}
			\cosh(t) & \sinh(t) \\
			\sinh(t) & \cosh(t)
		\end{pmatrix},
\]
for every $t \in \R$. In other words, the matrices $H(t)$ ($t \in \R$) yield all the elements of $\SO_0(1,1)$. We will use this notation from now on without further mention. Next, we consider the following product operation
\[
	(\epsilon_1, H(t_1)) \star (\epsilon_2, H(t_2)) 
		= (\epsilon_1 \epsilon_2, H(\epsilon_2 t_1 + t_2)), 
\]
for every $\epsilon_1, \epsilon_2 \in \{\pm 1\}$ and $t_1, t_2 \in \R$. It is straightforward to see the $\star$ defines a group structure on $\Z_2 \times \SO_0(1,1)$, where $\Z_2 = \{\pm 1\}$ carries the natural multiplication operation. The Lie group thus obtained is called the semi-direct product of $\Z_2$ and $\SO_0(1,1)$ and it will be denoted by $\Z_2 \ltimes \SO_0(1,1)$.

\begin{theorem}\label{thm:Iso(B,g)}
	Let us consider the maps given by
	\[
		(u,\epsilon_1, H(t),\epsilon_2) \cdot q = 
			\begin{cases}
				\epsilon_1 u (1 + \tanh(t)q)^{-1} 
				(q + \tanh(t))\overline{u}, & \text{ if } \epsilon_2 = +1, \\
				\epsilon_1 u (1 + \tanh(t) \overline{q})^{-1} 
				(\overline{q} + \tanh(t))\overline{u}, & \text{ if } \epsilon_2 = -1.
			\end{cases}
	\]
	Then, these maps define a left action of the group $\Spe(1) \times (\Z_2 \ltimes \SO_0(1,1)) \times \Z_2$ on $\B$. Furthermore, this action realizes the full isometry group of $(\B,g)$.
\end{theorem}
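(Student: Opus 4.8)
The plan is to show two things: first, that the given formulas do define a group action of $\Spe(1) \times (\Z_2 \ltimes \SO_0(1,1)) \times \Z_2$ on $\B$; and second, that the image of this action in the diffeomorphism group of $\B$ is exactly the isometry group described in Proposition~\ref{prop:Arcozzi-Sarfatti}. The second part is where the real content lies, so I would set up the first part quickly and spend most of the effort identifying each generator from the Arcozzi--Sarfatti list inside our parametrized family.

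First I would address the action axioms. The factor $\Spe(1)$ acts by $q \mapsto uq\overline u$ (the slice symmetries $C_u$), which is a genuine left action since $C_{u_1 u_2} = C_{u_1} \circ C_{u_2}$. The factor $\Z_2$ on the far right toggles between $q$ and $\overline q$ via $\epsilon_2$; since $\overline{\overline q} = q$ this is a $\Z_2$-action, and one checks it commutes appropriately with the $\SO_0(1,1)$ piece once one tracks how conjugation interacts with the Möbius formula. The middle factor $\Z_2 \ltimes \SO_0(1,1)$ acts through $\epsilon_1$ (the sign $q \mapsto -q$) together with the classical Möbius transformation $q \mapsto (1+\tanh(t)q)^{-1}(q + \tanh(t))$, which is the $\SO_0(1,1)$-action by \eqref{eq:MobiusClassical} applied to $H(t)$ — note $F_{a,1}$ from Proposition~\ref{prop:MobiusClassicalRegular} with $a = -\tanh(t)$. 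The $\star$ product is precisely engineered so that composing two such Möbius maps reproduces $H(t_1)H(t_2) = H(t_1+t_2)$ on the $\SO_0(1,1)$ level, while the semidirect twist $\epsilon_2 t_1 + t_2$ accounts for the fact that conjugation by $q \mapsto \overline q$ sends the Möbius parameter $t$ to $-t$ (since $\overline{H(t)\text{-action}}$ reverses the sign of $\sinh$); verifying the cocycle identity for $\star$ is the routine bookkeeping I would not grind through. I would also note that the $\Spe(1)$-conjugation commutes with real-coefficient Möbius transformations and with $q \mapsto \overline q$ up to replacing $u$ by $\overline u$, which is why $\Spe(1)$ sits as a direct factor.

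For surjectivity onto the isometry group, I would match generators one at a time against Proposition~\ref{prop:Arcozzi-Sarfatti}: the signed slice symmetries $(\epsilon,u)\cdot q = \epsilon u q \overline u$ are recovered by taking $t = 0$, $\epsilon_2 = +1$, and letting $\epsilon_1 = \epsilon$, $u = u$; the right $\SO_0(1,1)$-action is recovered by taking $u = 1$, $\epsilon_1 = \epsilon_2 = +1$ and varying $t$; and the involution $q \mapsto \overline q$ is recovered by taking $u = 1$, $t = 0$, $\epsilon_1 = +1$, $\epsilon_2 = -1$. Since these three families generate $I(\B,g)$, the action is surjective onto it. It remains to see the action lands \emph{inside} $I(\B,g)$, i.e.\ that each map in the family is an isometry: this follows because each is a composition of the listed generators (the general element is $\epsilon_1 C_u \circ F_{-\tanh t, 1} \circ (\text{optionally } q\mapsto\overline q)$, all isometries by Proposition~\ref{prop:Arcozzi-Sarfatti}), so the image of the action is contained in the group generated by those generators, which is $I(\B,g)$. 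Combining containment in both directions gives equality.

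The main obstacle I anticipate is not any single computation but the careful tracking of how the three factors interact — specifically, pinning down the exact semidirect-product structure so that the $\star$ operation genuinely yields associativity together with the conjugation-by-$\overline{(\,\cdot\,)}$ twist, and confirming that $\Spe(1)$ and the rightmost $\Z_2$ really do commute with the $\SO_0(1,1)$-action rather than only normalizing it. Once the bookkeeping for the group law is correct, the identification with $I(\B,g)$ is immediate from Proposition~\ref{prop:Arcozzi-Sarfatti} as sketched above. I would present the group-law verification as a ``straightforward computation'' and devote the written proof primarily to exhibiting the three generating families and invoking Proposition~\ref{prop:Arcozzi-Sarfatti} for both inclusions.
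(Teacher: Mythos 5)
Your proposal follows the same route as the paper's proof: verify the action factor by factor, check the pairwise commutation relations (with one semidirect twist), and then identify the resulting group of maps with the group generated by the three families of Proposition~\ref{prop:Arcozzi-Sarfatti}, giving both inclusions. One correction to your bookkeeping, which you defer but justify incorrectly: the semidirect product $\Z_2 \ltimes \SO_0(1,1)$ couples the \emph{sign} $\epsilon_1$ (the map $q \mapsto -q$) with the M\"obius maps, and the twist $H(\epsilon t_1 + t_2)$ arises because $q \mapsto -q$ conjugates $M_t(q) = (1+\tanh(t)q)^{-1}(q+\tanh(t))$ into $M_{-t}$ (indeed $M_t(-q) = -M_{-t}(q)$); the conjugation $q \mapsto \overline q$ commutes with $M_t$ on the nose, since the coefficients are real, which is precisely why $\epsilon_2$ splits off as a direct factor. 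Your parenthetical attributing the twist to $q \mapsto \overline q$ has this backwards, and grinding through the cocycle identity with that picture in mind would not close; with the roles of $\epsilon_1$ and $\epsilon_2$ corrected, the rest of the argument is sound and matches the paper.
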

\begin{proof}
	Note that the factors of the (set) product $\Spe(1) \times \Z_2 \times \SO_0(1,1) \times \Z_2$ are groups. Furthermore, if we restrict the value of $(u,\epsilon_1, H(t),\epsilon_2)$ in the set $\Spe(1) \times \Z_2 \times \SO_0(1,1) \times \Z_2$ so that all but a single entry is the identity element, then we obtain actions of corresponding factors. More precisely, this yields actions of the groups $\Spe(1)$, $\Z_2$ (second factor), $\SO_0(1,1)$ and $\Z_2$ (fourth factor). 
	
	It is easy to check that all four actions commute pairwise except for the case of the $\Z_2$-action of the second factor and the $\SO_0(1,1)$-action. However, the maps in the statement corresponding to values $(1,\epsilon,H(t),1)$ yield an action of the group $\Z_2 \ltimes \SO_0(1,1)$ with the semi-direct product $\star$ defined above. This can be verified with a simple computation.
	
	Hence, the maps in the statement yield actions of $\Spe(1)$, $\Z_2 \ltimes \SO_0(1,1)$ and $\Z_2$ (last factor in the product) that pairwise commute. We thus obtain an action of $\Spe(1) \times (\Z_2 \ltimes \SO_0(1,1)) \times \Z_2$ with the product (three factors) group structure.
	
	The set of maps obtained from the action of $\Spe(1) \times (\Z_2 \ltimes \SO_0(1,1)) \times \Z_2$ is clearly the group generated by the isometries of $(\B,g)$ listed in Proposition~\ref{prop:Arcozzi-Sarfatti}. And so, the latter implies that the action of this group realizes the isometry group of $(\B,g)$.
\end{proof}

\begin{remark}\label{rmk:Iso(B,g)}
	The action considered in Theorem~\ref{thm:Iso(B,g)} is not effective, i.e.~there exist non-trivial elements in the group acting trivially. More precisely, the subgroup of those elements of $\Spe(1) \times (\Z_2 \ltimes \SO_0(1,1)) \times \Z_2$ that act trivially (as the identity transformation) is given by $\{ (\pm 1, 1, I_2, 1) \}$. We conclude that the isometry group of $(\B,g)$ is isomorphic to
	\[
		\Spe(1)/\{\pm 1\} \times (\Z_2 \ltimes \SO_0(1,1)) \times \Z_2 
			\simeq \SO(3) \times (\Z_2 \ltimes \R) \times \Z_2,
	\]
	where $\Z_2 \ltimes \R$ has the semi-direct group structure given by the product 
	\[
		(\epsilon_1, t_1) \star (\epsilon_2, t_2) 
			= (\epsilon_1 \epsilon_2, \epsilon_2 t_1 + t_2) 
	\]
	for every $\epsilon_1, \epsilon_2 \in \{\pm 1\}$ and $t_1, t_2 \in \R$. This uses the fact that the map $t \mapsto H(t)$ defines an isomorphism of Lie groups between $\R$ and $\SO_0(1,1)$.
\end{remark}

\subsection{Lie theoretic symmetries of the slice Riemannian metric on $\B$} \label{subsec:sliceRiemLie}
As noted in Theorem~\ref{thm:RiemSymmBCentralizers}, the Riemannian symmetric space structure of $\B$ yields a realization of this space as quotient, on the right, of $\Spe(1,1)$ by the subgroup $\Spe(1) \times \Spe(1)$ (see~equation~\ref{eq:BasSymmQuotientProposition}). A corresponding, nevertheless different, realization of $\B$ can be obtained from the regular M\"obius transformations introduced in subsection~\ref{subsec:sliceregular}. More precisely, the next result has been proved in \cite{QB-cM(B)-asmanifold}. Note that the double coset considered in the statement does define a manifold as a consequence, in part, of the fact that both subgroups $(\Spe(1) \times \{1\})$ and $\Spe(1) I_2$ are compact. We refer to \cite{QB-cM(B)-asmanifold} for further details.

\begin{theorem}[Quiroga-Barranco \cite{QB-cM(B)-asmanifold}]
\label{thm:SliceRegularBasQuotient}
	The regular M\"obius transformations of $\B$ obtained from $\Spe(1,1)$ yield the map
	\begin{align} \label{eq:BasRegQuotientProposition}
		(\Spe(1) \times \{1\}) \backslash \Spe(1,1) / \Spe(1) I_2 
			&\xlongrightarrow{\sim} \B \\
		(\Spe(1) \times \{1\}) A \Spe(1) I_2 
			&\longmapsto \big(\cF_{A^{-1}}\big)^{-1}(0), \notag
	\end{align}
	which is a well-defined diffeomorphism of smooth manifolds.
\end{theorem}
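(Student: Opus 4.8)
The plan is to exhibit an explicit inverse to the candidate map and check it descends to the double coset space. First I would fix notation: write $\pi : \Spe(1,1) \to \B$ for the map $A \mapsto (\cF_{A^{-1}})^{-1}(0)$, and recall from subsection~\ref{subsec:sliceregular} (and the anti-homomorphism property of $A \mapsto F_A$, together with the analogous property for $A \mapsto \cF_A$) that $\cF_{A^{-1}} = \cF_A^{-*}$-type inverse, so that $(\cF_{A^{-1}})^{-1} = \cF_A$ up to the composition conventions; in other words $\pi(A) = \cF_A(0)$, which for $A = \begin{pmatrix} a & b \\ c & d \end{pmatrix}$ equals $\ell_{c,d}^{-*}*\ell_{a,b}$ evaluated at $0$, i.e.\ a concrete quaternionic expression in $a,b,c,d$. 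Having this explicit formula is the backbone of the whole argument.

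Next I would verify that $\pi$ is constant on the double cosets $(\Spe(1)\times\{1\})\, A\, \Spe(1)I_2$. Left multiplication by $\diag(u,1)$ with $u \in \Spe(1)$ should change $\cF_A$ by a slice-preserving rotation that fixes $0$, hence not move $\pi(A)$; right multiplication by $\diag(v,v) = vI_2$ with $v\in\Spe(1)$ corresponds (via the anti-homomorphism) to precomposition by a regular M\"obius transformation fixing $0$, again leaving $\pi(A)$ unchanged. Each of these is a short computation with the $*$-product formula for $\cF_A$ and the behaviour of $\ell_{a,b}$ under the relevant multiplications. This shows $\pi$ factors through a well-defined map $\overline{\pi}$ on the double coset space, which is automatically smooth because $\pi$ is smooth and the quotient by the compact groups $\Spe(1)\times\{1\}$ and $\Spe(1)I_2$ is a smooth submersion.

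For surjectivity and injectivity I would argue as follows. Given $q \in \B$, one needs $A \in \Spe(1,1)$ with $\cF_A(0) = q$; by Proposition~\ref{prop:MobiusClassicalRegular} the regular classical M\"obius transformations $F_{a,u}$ already realize every $q = F_{a,1}(0) \cdot$(rotation)$= -a$ for $a \in (-1,1)$, and combining with the $\Spe(1)$-rotation $q\mapsto uq\overline u$ (or $q \mapsto q\overline u$) together with the full freedom of regular M\"obius transformations one reaches all of $\B$; alternatively, one can invoke directly the known fact that $\cM(\B)$ acts transitively on $\B$. For injectivity, suppose $\cF_A(0) = \cF_{A'}(0)$. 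Then $\cF_{A'}\circ\cF_A^{-1}$ (a regular M\"obius transformation, using that $\cM(\B)$ is a group) fixes $0$; the stabilizer of $0$ in $\cM(\B)$ should consist exactly of the maps $q\mapsto q\overline u$, $u\in\Spe(1)$, and these correspond to right multiplication by $\Spe(1)I_2$, while the indeterminacy in writing a given regular M\"obius transformation as $\cF_A$ (the kernel of $A \mapsto \cF_A$, or rather the fibres of $\Spe(1,1)\to\cM(\B)$) is captured by left multiplication by $\Spe(1)\times\{1\}$ — this is where the double coset structure comes from. Assembling these two sources of non-uniqueness yields $A' \in (\Spe(1)\times\{1\})A\,\Spe(1)I_2$, giving injectivity of $\overline\pi$.

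Finally, to upgrade the smooth bijection $\overline\pi$ to a diffeomorphism I would check the differential is an isomorphism, e.g.\ by a dimension count — the double coset space has dimension $\dim\Spe(1,1) - 2\dim\Spe(1) = 10 - 6 = 4 = \dim\B$, and one computes $\dif\pi$ at a convenient point (say at $I_2$, where $\cF_{I_2} = \mathrm{id}$) and checks its image spans $T_0\B$ transversally to the tangent to the orbits — or, more cheaply, observe that $\overline\pi$ is a smooth bijection between manifolds of equal dimension and appeal to the fact (already used implicitly in the excerpt, cf.\ \cite{QB-cM(B)-asmanifold}) that it is proper, hence a homeomorphism, combined with the submersion $\Spe(1,1)\to\B$ being a surjective submersion so that $\overline\pi$ is a local diffeomorphism. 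The main obstacle I anticipate is the bookkeeping of conventions: pinning down precisely the kernel/fibres of $A\mapsto\cF_A$ and the stabilizer of $0$ in $\cM(\B)$, and matching these cleanly to left multiplication by $\Spe(1)\times\{1\}$ and right multiplication by $\Spe(1)I_2$ respectively — getting the two subgroups on the correct sides and verifying no further identifications are needed. Since this identification is exactly the content of \cite{QB-cM(B)-asmanifold}, I would lean on that reference for the delicate part and present the rest as the streamlined verification above.
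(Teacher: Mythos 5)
First, a point of comparison: the paper itself gives no proof of Theorem~\ref{thm:SliceRegularBasQuotient}; it is imported wholesale from \cite{QB-cM(B)-asmanifold}, so there is no internal argument to measure yours against. Judged on its own terms, your proposal has a genuine gap at what you yourself call its ``backbone''. You simplify $\big(\cF_{A^{-1}}\big)^{-1}$ to $\cF_A$, but this identity is false: the assignment $A \mapsto \cF_A$ is \emph{not} an anti-homomorphism, and Remark~\ref{rmk:SliceRegularBasQuotient} states explicitly that the expression $\big(\cF_{A^{-1}}\big)^{-1}$ ``cannot be simplified so that the inverses cancel each other.'' Only the classical map $A \mapsto F_A$ satisfies $F_A = \big(F_{A^{-1}}\big)^{-1}$. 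Every subsequent step in your sketch --- the descent computation via the behaviour of $\ell_{a,b}$, the formula $\pi(A) = \cF_A(0)$, the differential at $I_2$ --- is therefore carried out for the wrong map.

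Your injectivity argument has a second, independent problem: you form $\cF_{A'} \circ \cF_A^{-1}$ and invoke ``that $\cM(\B)$ is a group.'' It is not: $\cM(\B)$ is not closed under composition (again see Remark~\ref{rmk:SliceRegularBasQuotient}, which notes there is no natural group structure on $\cM(\B)$), so this composite need not be a regular M\"obius transformation and there is no ``stabilizer of $0$ in $\cM(\B)$'' to classify. Relatedly, the fibres of $A \mapsto \cF_A$ on $\Spe(1,1)$ are just $\{\pm A\}$, not the left $(\Spe(1)\times\{1\})$-orbits, so the double coset structure cannot be read off as ``fibres of $A\mapsto\cF_A$ times stabilizer of $0$.'' The invariance of $A \mapsto \big(\cF_{A^{-1}}\big)^{-1}(0)$ under the two subgroup actions, and the injectivity of the induced map, require genuine $*$-product computations of the kind appearing in the proofs of Proposition~\ref{prop:SliceRiemBCentralizers} and Theorem~\ref{thm:G=K1BK2-fromsliceregular} (e.g.\ $\big(\cF_{M(-a)^{-1}}\big)^{-1}(0) = a$, and the fact that left multiplication by $\diag(1,u)$ moves the image point to $u a \overline{u}$, whereas it is the matrices $\diag(u,1)$ that act trivially --- a distinction your heuristic glosses over). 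Your closing dimension count and the appeal to \cite{QB-cM(B)-asmanifold} for the manifold structure of the double coset are fine, but the parts you actually wrote out rest on the two false premises above and would need to be redone from the explicit formula for $\cF_{A^{-1}}$ rather than from $\cF_A$.
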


\begin{remark}\label{rmk:SliceRegularBasQuotient}
	We recall that the map $\Spe(1,1) \rightarrow \cM(\B)$ given by $A \mapsto \cF_A$ is not a homomorphism. In fact, there does not seem to exist a natural group structure on the set $\cM(\B)$. For this reason, the expression $\big(\cF_{A^{-1}}\big)^{-1}$ above cannot be simplified so that the inverses cancel each other. However, as noted in subsection~\ref{subsec:RiemSymmB}, the map $\Spe(1,1) \rightarrow \Mbb(\B)$ given by $A \mapsto F_A$ is an anti-homomorphism of Lie groups. In particular, we have
	\[
		F_A = \big(F_{A^{-1}}\big)^{-1},
	\]
	for every $A \in \Spe(1,1)$. We conclude that \eqref{eq:BasSymmQuotientProposition} and 
\eqref{eq:BasRegQuotientProposition}, from Theorems~\ref{thm:RiemSymmBCentralizers} and \ref{thm:SliceRegularBasQuotient}, respectively, correspond to each other in their description of $\B$; the former considers classical M\"obius transformations, while the latter considers regular M\"obius transformations.
\end{remark}	
\begin{remark}\label{rmk:SliceQuotientBundles}
	An important feature of the diffeomorphism \eqref{eq:BasRegQuotientProposition} stated in Theorem~\ref{thm:SliceRegularBasQuotient} is that it can be obtained by taking two successive subgroup quotients on a single side. More precisely, it is proved in \cite{QB-cM(B)-asmanifold} that this double coset quotient is obtained from two smooth maps
	\[
		\Spe(1,1) \longrightarrow (\Spe(1) \times \{1\}) \backslash \Spe(1,1)
			\longrightarrow (\Spe(1) \times \{1\}) \backslash \Spe(1,1) / \Spe(1) I_2,
	\]
	where both arrows are submersions (surjective with surjective differential at every point). Furthermore, both maps are the projections of principal fiber bundles; we refer to \cite{QB-cM(B)-asmanifold} for further details and definitions. The structure group for the first arrow is $\Spe(1) \times \{1\}$ acting on the left, while the structure group for the second arrow is $\Spe(1) I_2$ acting on the right.
	
	In particular, the diffeomorphism considered in Theorem~\ref{thm:SliceRegularBasQuotient} carries an additional structure: that of principal fiber bundles with smooth group actions. It is thus natural to consider the symmetries of $\Spe(1,1)$ that are proper of this structure. The automorphisms of a principal fiber bundle are those diffeomorphisms that commute with the action of the corresponding structure group. For our setup, this leads us to consider the diffeomorphisms of $\B$ induced by left or right translations of $\Spe(1,1)$ (symmetries of the group) that commute with both the left $\Spe(1) \times \{1\}$-action and the right $\Spe(1)I_2$-action on $\Spe(1,1)$ (symmetries of the principal fiber bundles). In other words, diffeomorphisms of $\B$ from the following list
	\begin{enumerate}
		\item for $A$ lying in the centralizer of $\Spe(1) \times \{1\}$ in $\Spe(1,1)$
			\[
				(\Spe(1) \times \{1\})\; X\; \Spe(1) I_2 \longmapsto 
					(\Spe(1) \times \{1\})\; AX\; \Spe(1) I_2,
			\]	
		\item for $A$ lying in the centralizer of $\Spe(1) I_2$ in $\Spe(1,1)$
			\[
				(\Spe(1) \times \{1\})\; X \;\Spe(1) I_2 \longmapsto 
					(\Spe(1) \times \{1\})\; XA\; \Spe(1) I_2,
		\]
	\end{enumerate}
	where we identify $\B \simeq (\Spe(1) \times \{1\}) \backslash \Spe(1,1) /\Spe(1) I_2$.
\end{remark}

In order to compute the symmetries of $\B$ coming from the diffeomorphism \eqref{eq:BasRegQuotientProposition} in Theorem~\ref{thm:SliceRegularBasQuotient}, while following the reasoning of Remark~\ref{rmk:SliceQuotientBundles}, we first compute the centralizers mentioned in the latter.

\begin{lemma}\label{lem:CentralizerSp(1)x1}
	The centralizer of $\Spe(1) \times \{1\}$ in $\Spe(1,1)$ is given by
	\[
		C_{\Spe(1,1)}\big(\Spe(1) \times \{1\}\big) = \{\pm 1\} \times \Spe(1)
			= \{ \diag(\epsilon, u) : u \in \Spe(1),\; \epsilon = \pm 1 \}.
	\]
\end{lemma}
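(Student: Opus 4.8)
The plan is to compute the centralizer directly from the defining condition. An element $A = \begin{pmatrix} a & c \\ b & d \end{pmatrix} \in \Spe(1,1)$ lies in $C_{\Spe(1,1)}(\Spe(1) \times \{1\})$ if and only if $A\,\diag(u,1) = \diag(u,1)\,A$ for every $u \in \Spe(1)$. Writing out the matrix products, this is equivalent to the four scalar conditions $au = ua$, $c = uc$, $bu = b$, and $d = d$, required to hold for all $u \in \Spe(1)$. The first condition says $a$ commutes with every unit quaternion, hence with all of $\HH$, so $a \in \R$. The second condition, taking $u \neq 1$, forces $c = 0$, and similarly the third forces $b = 0$. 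So $A = \diag(a, d)$ with $a \in \R$.

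Next I would impose membership in $\Spe(1,1)$, i.e.~$A^* I_{1,1} A = I_{1,1}$. For $A = \diag(a,d)$ this reads $\diag(\overline{a}a, -\overline{d}d) = \diag(1,-1)$, so $|a|^2 = 1$ and $|d|^2 = 1$; combined with $a \in \R$ this gives $a = \pm 1$ and $d = u \in \Spe(1)$ arbitrary. Conversely, any matrix $\diag(\epsilon, u)$ with $\epsilon = \pm 1$ and $u \in \Spe(1)$ clearly lies in $\Spe(1,1)$ and commutes with every $\diag(v,1)$, $v \in \Spe(1)$, since $\epsilon \in \R$ is central. This establishes the displayed equality, and one notes in passing that this centralizer is exactly $\{\pm I_2\} \cdot (\Spe(1) \times \{1\})^{\text{opp}}$, i.e.~the ``second factor'' copy of $\Spe(1)$ together with the center.

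There is essentially no obstacle here: the computation is elementary linear algebra over $\HH$, the only point requiring a moment's thought being the standard fact that the centralizer of $\Spe(1)$ inside the multiplicative structure of $\HH$ is $\R$ (equivalently, that an element commuting with $i$ and $j$ must be real). I would present the argument as a short direct verification in both inclusions, keeping the bookkeeping of the four entrywise equations explicit so the reader can check each step.
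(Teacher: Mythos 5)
Your proof is correct and follows essentially the same route as the paper's: write out the two matrix products, reduce to the entrywise conditions $au=ua$, $uc=c$, $bu=b$, conclude $a\in\R$ and $b=c=0$, and then use the defining relation of $\Spe(1,1)$ to force $a=\pm1$ and $d\in\Spe(1)$. The only difference is presentational — the paper compresses the last step into the single observation $a\in\R\cap\Spe(1)=\{\pm1\}$, whereas you spell out the condition $A^*I_{1,1}A=I_{1,1}$ explicitly.
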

\begin{proof}
	Let $A \in \Spe(1,1)$ be the matrix
	\[
		A = \begin{pmatrix}
			a & c \\
			b & d
		\end{pmatrix}.
	\]
	Then, $A \diag(u,1) = \diag(u,1) A$ for $u \in \Spe(1)$ if and only if we have 
	\[
		au = ua,\quad bu = b,\quad uc = c.
	\]
	Hence, $A \in C_{\Spe(1,1)}\big(\Spe(1) \times \{1\}\big)$ is equivalent to $a \in \R \cap \Spe(1) = \{\pm 1\}$ and $b = c = 0$. In this case, we necessarily have $d \in \Spe(1)$ and so the result follows.
\end{proof}

\begin{lemma}\label{lem:CentralizerSp(1)I2}
	The centralizer of $\Spe(1) I_2$ in $\Spe(1,1)$ is given by
	\[
		C_{\Spe(1,1)}\big(\Spe(1) I_2\big) = \OO(1,1)  
			= \{A \in M_2(\R) : A^\top I_{1,1} A  = I_{1,1} \}.
	\]
\end{lemma}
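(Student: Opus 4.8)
The plan is to compute directly when a matrix $A \in \Spe(1,1)$ commutes with every element of $\Spe(1) I_2 = \{\diag(u,u) : u \in \Spe(1)\}$, and then to identify the resulting condition with membership in $\OO(1,1)$. The containment $\OO(1,1) \subseteq C_{\Spe(1,1)}(\Spe(1) I_2)$ is the easy direction: a real matrix $A \in M_2(\R)$ commutes with the scalar-type matrix $\diag(u,u) = u I_2$ because $u$ is central among real scalars (more precisely, each entry of $A$ is real, hence commutes with $u$), so $A(uI_2) = uA = (uI_2)A$; and by definition $\OO(1,1) \subseteq \Spe(1,1)$ since $A^\top I_{1,1} A = I_{1,1}$ together with $A^* = A^\top$ for real matrices gives the defining relation of $\Spe(1,1)$.

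For the reverse containment, I would write $A = \begin{pmatrix} a & c \\ b & d \end{pmatrix}$ with $a,b,c,d \in \HH$ and impose $A \diag(u,u) = \diag(u,u) A$ for all $u \in \Spe(1)$. Entrywise this reads $au = ua$, $bu = ub$, $cu = uc$, $du = ud$ for every $u \in \Spe(1)$. Since the only quaternions commuting with all unit quaternions are the real ones (the center of $\HH$ is $\R$), each of $a,b,c,d$ lies in $\R$, i.e. $A \in M_2(\R)$. Combined with $A \in \Spe(1,1)$, which for a real matrix reduces to $A^\top I_{1,1} A = I_{1,1}$, this is exactly the definition of $\OO(1,1)$. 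Hence $C_{\Spe(1,1)}(\Spe(1) I_2) = \OO(1,1)$.

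This lemma is essentially a routine computation, so I do not anticipate a genuine obstacle; the only point requiring care is the elementary fact that the centralizer of $\Spe(1)$ inside $\HH$ is $\R$ — this follows by testing against the standard imaginary units $i$ and $j$ (a quaternion commuting with both $i$ and $j$ commutes with all of $\HH$, forcing it to be real), which I would state briefly rather than belabor. I would present the proof in two short movements: first the entrywise commutation analysis yielding $A \in M_2(\R)$, then the observation that $\Spe(1,1) \cap M_2(\R) = \OO(1,1)$ by the stated definition, closing with the remark that the reverse inclusion is immediate since real matrices commute with real scalar multiples of $I_2$.
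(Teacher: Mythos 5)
Your proof is correct and follows essentially the same route as the paper: commuting with all of $\Spe(1)I_2$ forces every entry of $A$ into the center $\R$ of $\HH$, and then $\Spe(1,1)\cap M_2(\R)=\OO(1,1)$ by definition. You merely spell out the centralizer-of-$\Spe(1)$-in-$\HH$ step and the easy reverse inclusion, which the paper leaves implicit.
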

\begin{proof}
	For a given $A \in \Spe(1,1)$, we have $A \in C_{\Spe(1,1)}\big(\Spe(1) I_2\big)$ if and only if all entries of $A$ belong to the center of $\HH$, i.e.~$A$ is a real matrix. This yields the result since $\OO(1,1) = \Spe(1,1) \cap M_2(\R)$ by the definition of $\OO(1,1)$.
\end{proof}

We now obtain the diffeomorphisms of $\B$ that come from the natural symmetries of $\Spe(1,1)$ associated to left and right translations.

\begin{proposition}\label{prop:SliceRiemBCentralizers}
	The diffeomorphisms of $\B$ obtained from 
	\[
		(\Spe(1) \times \{1\}) \backslash \Spe(1,1) / \Spe(1) I_2 \xlongrightarrow{\sim} \B,
	\]
	and a left or a right translation of $\Spe(1,1)$ that commutes with the left $\Spe(1) \times \{1\}$-action and the right $\Spe(1) I_2$-action (and so, that descend to $\B$) are given by the following actions on $\B$.
	\begin{enumerate}
		\item The left $\{\pm 1\} \times \Spe(1)$-action given by $\diag(\epsilon,u) \cdot q = \epsilon u q \overline{u}$.
		\item The right $\SO_0(1,1)$-action by (regular and) classical M\"obius transformations.
	\end{enumerate}
\end{proposition}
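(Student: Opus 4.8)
The plan is to combine the explicit description of the two centralizers just computed (Lemmas~\ref{lem:CentralizerSp(1)x1} and \ref{lem:CentralizerSp(1)I2}) with the explicit diffeomorphism \eqref{eq:BasRegQuotientProposition} from Theorem~\ref{thm:SliceRegularBasQuotient}, and simply chase what each admissible translation does on $\B$ through that identification. By Remark~\ref{rmk:SliceQuotientBundles}, the left translations that descend are exactly $L_A$ for $A \in C_{\Spe(1,1)}(\Spe(1)\times\{1\}) = \{\pm 1\}\times\Spe(1)$, and the right translations that descend are exactly $R_A$ for $A \in C_{\Spe(1,1)}(\Spe(1)I_2) = \OO(1,1)$. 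So I would split the proof into a ``left'' part and a ``right'' part, handling each centralizer separately.

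For the left part, take $A = \diag(\epsilon,u)$ with $\epsilon = \pm 1$, $u \in \Spe(1)$. I need to trace $(\Spe(1)\times\{1\})X\,\Spe(1)I_2 \mapsto (\Spe(1)\times\{1\})AX\,\Spe(1)I_2$ through \eqref{eq:BasRegQuotientProposition}, i.e.~compute how the point $q = (\cF_{X^{-1}})^{-1}(0)$ changes when $X$ is replaced by $AX$. The cleanest route is to choose a convenient coset representative: given $q \in \B$, pick $X$ so that $\cF_{X^{-1}}(q) = 0$, then identify $AX$ with a regular M\"obius transformation and read off the new base point. Since $A = \diag(\epsilon,u)$ is block-diagonal and real-or-central in a controlled way, conjugation/left-multiplication by it should implement $q \mapsto \epsilon u q \overline{u}$ after the dust settles; here I would lean on the slice-invariance property $C_u(q) = \overline{u}qu$ being an isometry (property (1) in subsection~\ref{subsec:sliceRiem}) and on the behavior of $\cF_A$ under scaling of matrix entries. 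For the right part, take $A \in \OO(1,1)$; using the decomposition $\OO(1,1) = \SO_0(1,1) \cup (-\SO_0(1,1)) \cup \SO_0(1,1)I_{1,1} \cup (-\SO_0(1,1)I_{1,1})$ recorded earlier, I would reduce to the generator $H(t) \in \SO_0(1,1)$ (the factors $-I_2$ and $I_{1,1}$ act trivially or get absorbed into the left $\Spe(1)\times\{1\}$ and right $\Spe(1)I_2$ cosets, respectively), and then show that right multiplication by $H(t)$ descends to the classical M\"obius transformation $F_{H(t)}$ given by \eqref{eq:MobiusClassical} — which by Proposition~\ref{prop:MobiusClassicalRegular} and Corollary~\ref{cor:MobiusClassicalRegular} is precisely a regular classical M\"obius transformation, and these exhaust the right $\SO_0(1,1)$-action.

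The main obstacle I anticipate is purely bookkeeping, not conceptual: the map $A \mapsto \cF_A$ is \emph{not} a homomorphism (Remark~\ref{rmk:SliceRegularBasQuotient}), so I cannot just say ``$L_A$ on $\Spe(1,1)$ corresponds to $\cF_A$ composed with something on $\B$.'' I have to work through the $*$-product definition $\cF_A = \ell_{c,d}^{-*} * \ell_{a,b}$ honestly, and track how left/right multiplication by the relevant $A$ permutes the entries $a,b,c,d$ and hence the two linear factors $\ell_{a,b}, \ell_{c,d}$. The saving grace is that the admissible $A$'s are very special (block-diagonal unit quaternions on one side, $2\times 2$ real hyperbolic-or-reflection matrices on the other), so each computation collapses to something manageable; and the $\OO(1,1)$ side can be shortcut entirely by invoking that the right $\Spe(1,1)$-action by \emph{classical} M\"obius transformations is already understood (subsection~\ref{subsec:RiemSymmB}) and that on the real subgroup $\OO(1,1)$ classical and regular M\"obius transformations agree up to the $\Spe(1)$-factor, which has been pushed into the other coset. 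I would also double-check that no further elements of $\OO(1,1)$ or of $\{\pm1\}\times\Spe(1)$ are lost: the $-I_2 \in \{\pm1\}\times\Spe(1)$ gives the isometry $q \mapsto -q$, which is the $\epsilon = -1$ case, and one must confirm it is genuinely realized (not absorbed into a trivially-acting coset) — this is where the non-effectiveness noted in Remark~\ref{rmk:Iso(B,g)} needs a careful word. Once these matchings are in place, the statement follows by comparing the resulting list with items (1) and (2) of Proposition~\ref{prop:Arcozzi-Sarfatti} minus the orientation-reversing involution $q \mapsto \overline q$, which (consistently with the ``index $2$'' remark in the introduction) does \emph{not} arise from any group translation of $\Spe(1,1)$.
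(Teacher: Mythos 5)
Your overall strategy coincides with the paper's: restrict to the centralizers from Lemmas~\ref{lem:CentralizerSp(1)x1} and \ref{lem:CentralizerSp(1)I2}, represent $a\in\B$ by the coset of $M(a)^{-1}=M(-a)$, and locate the zero of the relevant regular M\"obius transformation by an honest $*$-product computation; you also correctly flag the main hazard, namely that $A\mapsto\cF_A$ is not a homomorphism. However, your bookkeeping of the sign $\epsilon$ is wrong in two places, and the two errors do not cancel inside a proof even though they happen to produce the same final list of maps. First, the left translation by $\diag(\epsilon,u)$ does \emph{not} induce $q\mapsto\epsilon uq\overline{u}$: computing $\cF_{(\diag(\epsilon,u)M(a)^{-1})^{-1}}=\cF_{M(a)\diag(\epsilon,\overline{u})}$ one finds a regular function multiplied by the constant $\epsilon$ on the right, so its zero is $ua\overline{u}$ \emph{independently} of $\epsilon$, and the left centralizer only yields the subgroup $\{1\}\times\Spe(1)$ of the action in item (1). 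In particular $-I_2$ does not give $q\mapsto -q$; since $-I_2=(-1)I_2\in\Spe(1)I_2$ it is absorbed into the right coset and acts trivially, which is the opposite of what you assert when you insist it is ``genuinely realized.''

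Second, the element $I_{1,1}=\diag(1,-1)\in\OO(1,1)$ is \emph{not} absorbed into $\Spe(1)I_2=\{\diag(u,u):u\in\Spe(1)\}$ and does not act trivially: the right translation by $H(t)\diag(1,\epsilon)$ induces $a\mapsto\epsilon(1+\tanh(t)a)^{-1}(a+\tanh(t))$, so $t=0$, $\epsilon=-1$ produces precisely the map $q\mapsto -q$. This is where the $\epsilon=-1$ half of item (1) comes from; as written, your argument loses $q\mapsto -q$ entirely (having wrongly credited it to $-I_2$ on the left) while overcounting on the left. The repair is simply to carry out the two explicit computations you already planned, for $\big(\diag(\epsilon,u)M(a)^{-1}\big)^{-1}=M(a)\diag(\epsilon,\overline{u})$ and for $\big(M(a)^{-1}H(t)\diag(1,\epsilon)\big)^{-1}=\diag(1,\epsilon)H(-t)M(a)$, and read off the zeros; the remaining components $-\SO_0(1,1)$ and $-\SO_0(1,1)I_{1,1}$ of $\OO(1,1)$ are then disposed of via $\cF_{-B}=\cF_{B}$.
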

\begin{proof}
	We observe that a left translation $L_A$ of $\Spe(1,1)$ induces a diffeomorphism of $\B$, with the conditions considered in Remark~\ref{rmk:SliceRegularBasQuotient}, precisely when we have $A \in C_{\Spe(1,1)}(\Spe(1) \times \{1\})$. Similarly, a right translation $R_A$ of $\Spe(1,1)$ induces a diffeomorphism of $\B$, again with the restrictions stated in the referenced remark, when $A \in C_{\Spe(1,1)}(\Spe(1) I_2)$. Hence, we will compute the corresponding maps on $\B$ for these two cases using Lemmas~\ref{lem:CentralizerSp(1)x1} and \ref{lem:CentralizerSp(1)I2}, respectively.
	
	Let us assume that $A \in C_{\Spe(1,1)}(\Spe(1) \times \{1\}) = \{\pm 1\} \times \Spe(1)$. In other words, we have $A = \diag(\epsilon, u)$, where $\epsilon \in \{\pm 1\}$ and $u \in \Spe(1)$. For every $a \in \B$, let us consider the matrix
	\[
		M(a) = \frac{1}{\sqrt{1 - |a|^2}}
			\begin{pmatrix*}[r]
				1 & -\overline{a} \\
				-a & 1
			\end{pmatrix*}.
	\]
	One can easily verify that $M(a) \in \Spe(1,1)$ and that it has inverse $M(a)^{-1} = M(-a)$. It is also straightforward to compute
	\[
		\big(\cF_{M(-a)^{-1}}\big)^{-1}(0) = a.
	\]
	Hence, the inverse of the diffeomorphism considered in Theorem~\ref{thm:SliceRegularBasQuotient} is the map from $\B$ onto our double coset quotient given by $a \mapsto [M(-a)] = [M(a)^{-1}]$. Note that, in the rest of this proof, we will abbreviate the classes of the double coset quotient with brackets: $[\;\cdot\;]$. It follows that the left translation $L_{\diag(\epsilon, u)} = L_A$ induces the diffeomorphism of $\B$ obtained from the following composition of assignments
	\[
	\xymatrix{
		a \ar@{|-{>}}[r] & [M(a)^{-1}] \ar@{|-{>}}[r]^-{L_A}
		& [\diag(\epsilon, u) M(a)^{-1}] \ar@{|-{>}}[r]
		& \Big(\cF_{\big(\diag(\epsilon, u)M(a)^{-1}\big)^{-1}}\Big)^{-1}(0).
	}
	\]
	Hence, we need to compute that last expression. We observe that
	\[
		\big(\diag(\epsilon,u) M(a)^{-1}\big)^{-1} =
				M(a) \diag(\epsilon, \overline{u}) 
			=\frac{1}{\sqrt{1 - |a|^2}}
			\begin{pmatrix*}[r]
				\epsilon & -\overline{ua} \\
				-\epsilon a & \overline{u}
			\end{pmatrix*},
	\]
	and it follows from the definition of regular M\"obius transformation that
	\begin{align*}
		\cF_{\big(\diag(\epsilon, u)M(a)^{-1}\big)^{-1}}&(q) 
			= \big((\overline{u} - q \overline{ua})*(u - qua)\big)^{-1}
				\cdot (u - qua)*(q - a)  \epsilon \\
			&= \big(1 - 2 \re(a) q + q^2 |a|^2\big)^{-1} \cdot
				\big(q(u + u a^2) - (q^2 + 1) ua \big) \epsilon,
	\end{align*}
	which vanishes precisely at $ua\overline{u}$. We conclude that $L_{\diag(\epsilon,u)}$, as chosen above, induces the diffeomorphism of $\B$ given by $a \mapsto u a \overline{u}$. This yields the action from (1) in the statement restricted to the subgroup $\{1\} \times \Spe(1)$.
	
	Let us now consider the case where $A \in C_{\Spe(1,1)}(\Spe(1) I_2) = \OO(1,1)$. Following the description of $\OO(1,1)$ at the end of subsection~\ref{subsec:sliceregular}, we consider first $A = H(t) I(\epsilon)$, where 
	\[
		H(t) =
		\begin{pmatrix}
			\cosh(t) & \sinh(t) \\
			\sinh(t) & \cosh(t)
		\end{pmatrix},
		\quad
		I(\epsilon) =
		\begin{pmatrix}
			1 & 0 \\
			0 & \epsilon
		\end{pmatrix}
	\]
	for some $t \in \R$ and $\epsilon \in \{\pm 1\}$. As before, the diffeomorphism of $\B$ induced by $R_A$ is given by the following sequence of assignments
	\[
	\xymatrix{
		a \ar@{|-{>}}[r] & [M(a)^{-1}] \ar@{|-{>}}[r]^-{R_A}
			& [M(a)^{-1} H(t) I(\epsilon)] \ar@{|-{>}}[r]
			& \Big(\cF_{\big(M(a)^{-1} H(t) I(\epsilon)\big)^{-1}}\Big)^{-1}(0).
	}
	\]
	Since we have
	\begin{align*}
		\big(M(a)^{-1} H(t)& I(\epsilon)\big)^{-1} 
			= I(\epsilon) H(-t) M(a) \\
			&= \frac{1}{\sqrt{1 - |a|^2}}
				\begin{pmatrix*}[r]
					\cosh(t) & -\sinh(t) \\
					-\epsilon\sinh(t) & \epsilon\cosh(t)
				\end{pmatrix*}
				\begin{pmatrix*}[r]
					1 & -\overline{a} \\
					-a & 1
				\end{pmatrix*}  \\
			&= \frac{1}{\sqrt{1 - |a|^2}}
				\begin{pmatrix*}[r]
					\cosh(t)+\sinh(t)a & -\sinh(t)-\cosh(t)\overline{a}  \\
					-\epsilon\sinh(t)-\epsilon\cosh(t)a & 	
						\epsilon\cosh(t)+\epsilon\sinh(t)\overline{a}
				\end{pmatrix*}  \\
			&= \frac{\cosh(t)}{\sqrt{1 - |a|^2}}
				\begin{pmatrix*}[r]
					1+\tanh(t)a & -\tanh(t)-\overline{a}  \\
					-\epsilon(\tanh(t)+a) & \epsilon(1+\tanh(t)\overline{a})
				\end{pmatrix*}
	\end{align*}
	we now compute 
	\begin{align*}
		\cF_{\big(M(a)^{-1} H(t) I(\epsilon)\big)^{-1}}(q) =&  \\
			= \Big(
					\big(\epsilon(1 + \tanh(t) \overline{a})
						 -&q(\tanh(t) + \overline{a}) \big)^s 
				\Big)^{-1} \cdot  \\
			\cdot
				\Big( 
					\epsilon(1 + &\tanh(t) a)
					-q(\tanh(t) + a)				
				\Big)  \\
			* 
				\Big(
					&q(1 + \tanh(t)a)
						-\epsilon(\tanh(t) + a)
				\Big)  \\
			= \Big(
					\big(\epsilon(1 + \tanh(t) \overline{a})
					-&q(\tanh(t) + \overline{a}) \big)^s 
					\Big)^{-1} \cdot  \\
			\cdot
				\Big(
					-&\; q^2(\tanh(t) + a)(1 + \tanh(t)a) \\
					&+ q\epsilon(1+\tanh(t)a)^2 \\
					&+ q\epsilon(\tanh(t)+a)^2 \\
					&- (1+\tanh(t)a)(\tanh(t)+a)
				\Big).				
	\end{align*}
	It is straightforward to verify that the last expression vanishes at the point
	\[
		\epsilon(1 + \tanh(t)a)^{-1}(\tanh(t) + a),
	\]
	and so we conclude that, in the current case, we obtain the diffeomorphisms of $\B$ given by
	\[
		a \longmapsto \epsilon(1 + \tanh(t) a)^{-1}(a + \tanh(t)),
	\]
	where $t \in \R$ and $\epsilon \in \{\pm 1\}$. With $t = 0$ and $\epsilon = -1$, we obtain the diffeomorphism $a \mapsto -a$, thus completing the maps corresponding to the left $\{\pm 1\} \times \Spe(1)$-action stated in (1). On the other hand, if we choose $\epsilon = 1$, then we obtain the right $\SO_0(1,1)$-action stated in (2).	
	
	Finally, with our current notation and by the description of $\OO(1,1)$ at the end of subsection~\ref{subsec:sliceregular}, it remains to consider the matrix $-H(t) I(\epsilon)$. However, we have $\cF_{-H(t) I(\epsilon)} = \cF_{H(t) I(\epsilon)}$, and so we obtain diffeomorphisms of $\B$ already listed. This completes the proof.
\end{proof}

It turns out that the isometries of $(\B,g)$, obtained from suitable left and right translations of $\Spe(1,1)$, recover most of the isometry group of this Riemannian manifold. We recall our current notation
\[
	H(t) =
		\begin{pmatrix}
			\cosh(t) & \sinh(t) \\
			\sinh(t) & \cosh(t)
		\end{pmatrix},
\]
for every $t \in \R$.

\begin{theorem}\label{thm:Iso(B,g)fromSp(1,1)}
	The subgroup $I^+(\B,g)$ of orientation preserving isometries of the Riemannian manifold $(\B,g)$ is realized by the $\Spe(1) \times (\Z_2 \ltimes \SO_0(1,1))$-action on $\B$ given by
	\[
		(u,\epsilon,H(t))\cdot q 
			= \epsilon u (1 + \tanh(t)q)^{-1} (q + \tanh(t))\overline{u},
	\]
	where $u \in \Spe(1)$, $\epsilon \in \Z_2$ and $t \in \R$. Furthermore, this action corresponds to all diffeomorphisms of $\B$ that come from either left or right translations in $\Spe(1,1)$ (under the identification \eqref{eq:BasRegQuotientProposition}) that commute with the left $\Spe(1) \times \{1\}$-action and the right $\Spe(1)I_2$-action on $\Spe(1,1)$.
\end{theorem}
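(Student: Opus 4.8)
The plan is to derive Theorem~\ref{thm:Iso(B,g)fromSp(1,1)} directly from Theorem~\ref{thm:Iso(B,g)} and Proposition~\ref{prop:SliceRiemBCentralizers}, the point being an orientation count. The first step is to notice that the maps $(u,\epsilon,H(t))\cdot q$ in the statement are exactly the maps $(u,\epsilon,H(t),+1)\cdot q$ of Theorem~\ref{thm:Iso(B,g)} with the last parameter set to $+1$, and that the elements of $\Spe(1) \times (\Z_2 \ltimes \SO_0(1,1)) \times \Z_2$ with last coordinate $1$ form a subgroup isomorphic to $\Spe(1) \times (\Z_2 \ltimes \SO_0(1,1))$. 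Hence Theorem~\ref{thm:Iso(B,g)} already supplies, for free, the fact that these maps constitute a left action of $\Spe(1) \times (\Z_2 \ltimes \SO_0(1,1))$ on $\B$ by isometries of $(\B,g)$. What remains is to show that the subgroup of isometries so obtained is precisely $I^+(\B,g)$, and that it equals the group of translation-induced diffeomorphisms.

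For the identification with $I^+(\B,g)$ I would argue as follows. Since $\B$ is a connected open subset of $\HH \simeq \R^4$ it is orientable, so every isometry of $(\B,g)$ either preserves or reverses orientation and $I^+(\B,g)$ is a subgroup. I would check that each of the three types of generating isometry with $\epsilon_2 = +1$ preserves orientation: the conjugations $q \mapsto uq\overline{u}$ do, because $u \mapsto (q \mapsto uq\overline{u})$ is a continuous map from the connected group $\Spe(1) = S^3$ into the orthogonal group of $\HH$ sending $1$ to the identity, hence lands in $\SO(4)$; the map $q \mapsto -q$ does, because $-\mathrm{id}_\HH$ has determinant $(-1)^4 = 1$; and the classical M\"obius transformations coming from the right $\SO_0(1,1)$-action do, because $\SO_0(1,1)$ is connected and the action is smooth, so they are connected to the identity through diffeomorphisms of $\B$. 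Consequently every $(u,\epsilon,H(t),+1)\cdot$ preserves orientation. In the opposite direction, the involution $\psi : q \mapsto \overline{q}$ fixes $\R$ pointwise and acts as $-\mathrm{id}$ on $\im(\HH) \simeq \R^3$, so it has determinant $(-1)^3 = -1$ and reverses orientation; and since each map with $\epsilon_2 = -1$ in Theorem~\ref{thm:Iso(B,g)} is of the form $(u,\epsilon,H(t),+1)\cdot \circ \, \psi$, all of them reverse orientation. Because Theorem~\ref{thm:Iso(B,g)} realizes the full isometry group as the collection of the $\epsilon_2 = +1$ maps together with the $\epsilon_2 = -1$ maps, it follows that $I^+(\B,g)$ is exactly the $\epsilon_2 = +1$ maps, i.e.\ the action in the statement, and that $I^+(\B,g)$ has index $2$ (the coset of $\psi$ being the non-trivial one).

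For the last assertion I would invoke Proposition~\ref{prop:SliceRiemBCentralizers}: under the identification \eqref{eq:BasRegQuotientProposition}, the left translations of $\Spe(1,1)$ commuting with the left $\Spe(1) \times \{1\}$-action and the right $\Spe(1)I_2$-action induce the maps $q \mapsto \epsilon uq\overline{u}$, which are the $(u,\epsilon,H(0),+1)\cdot$, while the corresponding right translations induce the classical M\"obius transformations of the right $\SO_0(1,1)$-action, which are the $(1,1,H(t),+1)\cdot$. In particular every translation-induced diffeomorphism is among the $(u,\epsilon,H(t),+1)\cdot$, so it belongs to the action in the statement. Conversely, since $\epsilon \in \{\pm 1\}$ is real and hence central in $\HH$, the identity
\[
	u\bigl(\epsilon(1+\tanh(t)q)^{-1}(q+\tanh(t))\bigr)\overline{u} = (u,\epsilon,H(t),+1)\cdot q
\]
exhibits each map of the action as the composition of a right-translation-induced diffeomorphism $q \mapsto \epsilon(1+\tanh(t)q)^{-1}(q+\tanh(t))$ followed by a left-translation-induced diffeomorphism $q \mapsto uq\overline{u}$. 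Thus the group generated by the translation-induced diffeomorphisms coincides with the action in the statement, which by the previous step is $I^+(\B,g)$.

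I do not anticipate a genuine obstacle: once Theorem~\ref{thm:Iso(B,g)} and Proposition~\ref{prop:SliceRiemBCentralizers} are in hand, the proof is essentially bookkeeping. The one place demanding a little care is the orientation count, specifically the verification that \emph{every} $\epsilon_2 = -1$ map is orientation reversing, so that the disjoint decomposition of the isometry group given by Theorem~\ref{thm:Iso(B,g)} matches the splitting into orientation preserving and orientation reversing isometries; writing each such map as a composite with $q \mapsto \overline{q}$ makes this transparent. A secondary, purely expository, point is that the translation-induced diffeomorphisms form a generating set rather than a subgroup on the nose, so the last claim is to be read as an identification of groups.
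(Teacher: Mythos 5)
Your proposal is correct and follows essentially the same route as the paper: cite Proposition~\ref{prop:SliceRiemBCentralizers} for the identification with translation-induced maps, Theorem~\ref{thm:Iso(B,g)} for isometricity, use connectedness of $\Spe(1)$ and $\SO_0(1,1)$ (plus the determinant of $-\mathrm{id}_\HH$) to see the $\epsilon_2=+1$ maps are orientation preserving, and the orientation-reversing character of $q\mapsto\overline{q}$ to conclude the action fills out all of $I^+(\B,g)$. You are somewhat more explicit than the paper on the index-$2$ bookkeeping (writing each $\epsilon_2=-1$ map as an $\epsilon_2=+1$ map composed with conjugation) and on reading the final claim as a statement about the generated group, but these are refinements of the same argument rather than a different one.
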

\begin{proof}
	Proposition~\ref{prop:SliceRiemBCentralizers} proves that the expression considered in the statement yields an action whose transformations satisfy the last claim above. This action is isometric by Theorem~\ref{thm:Iso(B,g)}.
	
	Since the groups $\Spe(1)$ and $\SO_0(1,1)$ are connected their actions are necessarily orientation preserving. The map $q \mapsto -q$ is clearly orientation preserving as well. Hence, the isometries coming from the action in the statement form a subgroup of $I^+(\B,g)$. The conjugation $q \mapsto \overline{q}$ is orientation reversing, and so Theorem~\ref{thm:Iso(B,g)} implies that the action from the statement realizes the whole group $I^+(\B,g)$.
\end{proof}

\begin{remark}\label{rmk:Iso(B,g)fromSp(1,1)}
	It follows from the observations made in Remark~\ref{rmk:Iso(B,g)} and Theorem~\ref{thm:Iso(B,g)fromSp(1,1)} that 
	\[
		I^+(\B,g) \simeq \SO(3) \times (\Z_2 \ltimes \R),
	\]
	where the semi-direct product structure on the second factor is the one described in Remark~\ref{rmk:Iso(B,g)}.
\end{remark}

We next show that the double coset representation of $\B$ given in Theorem~\ref{thm:SliceRegularBasQuotient} induces a decomposition of $\Spe(1,1)$ which is thus associated to the regular M\"obius transformations. We recall that $\frakm_\B$ was defined in subsection~\ref{subsec:RiemSymmB}.

\begin{theorem}\label{thm:G=K1BK2-fromsliceregular}
	The map $\pi : \Spe(1) \times \frakm_\B \times \Spe(1) \rightarrow \Spe(1,1)$ given by
	\[
		(u,X,v) \longmapsto \diag(u,1) \exp(X) v,
	\]
	is a diffeomorphism.
\end{theorem}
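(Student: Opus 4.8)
The plan is to mimic the strategy used to prove Proposition~\ref{prop:Sp(1,1)=KB}, but adapted to the double coset setting of Theorem~\ref{thm:SliceRegularBasQuotient}. First I would observe that $\pi$ is smooth, being built from matrix multiplication and the exponential map, so the content is bijectivity together with smoothness of the inverse. For surjectivity, given $A\in\Spe(1,1)$, I would use Theorem~\ref{thm:SliceRegularBasQuotient}: the class $(\Spe(1)\times\{1\})\,A\,\Spe(1)I_2$ corresponds to a point $a\in\B$. Via the slice regular invariance and the geodesic description for $(\B,g)$ coming from Proposition~\ref{prop:Arcozzi-Sarfatti} (the rays through the origin are geodesics), there should be a canonical choice of $X\in\frakm_\B$ whose associated classical M\"obius/exponential image carries $0$ to (a representative of) $a$. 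Concretely, $\exp(X)$ for $X=t\begin{pmatrix}0&\overline u\\u&0\end{pmatrix}$ acts on $0$ by $0\cdot\exp(X)=\tanh(t)u$, so one recovers $X$ from $a$ through the diffeomorphism $f:\frakm_\B\to\B$, $f(X)=0\cdot\exp(X)$, which was already shown to be a diffeomorphism in the proof of Proposition~\ref{prop:Sp(1,1)=KB}. Then $A\exp(X)^{-1}$ maps, modulo the two compact subgroups, to the origin, which should force $A\exp(X)^{-1}\in(\Spe(1)\times\{1\})\cdot\Spe(1)I_2$, i.e.\ $A=\diag(u,1)\exp(X)\diag(v,v)$ for suitable $u,v\in\Spe(1)$; absorbing one factor into $\exp(X)$ via slice invariance gives the desired form $\diag(u,1)\exp(X)v$.

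For injectivity, suppose $\diag(u_1,1)\exp(X_1)v_1=\diag(u_2,1)\exp(X_2)v_2$. Projecting to the double coset quotient and applying Theorem~\ref{thm:SliceRegularBasQuotient} forces the corresponding points of $\B$ to coincide; then using that $f$ is a diffeomorphism I would like to conclude $X_1=X_2$ — but here one must be careful, because two different elements $X$ of $\frakm_\B$ can give the same double coset even when they give distinct points only after the quotient is taken. The right way to handle this is to note that $\exp(X)=\begin{pmatrix}\cosh t&\sinh t\,\overline u\\\sinh t\,u&\cosh t\end{pmatrix}$ and to track how the left $\Spe(1)\times\{1\}$ and right $\Spe(1)I_2$ actions move such a matrix, showing that $X$ is uniquely determined in $\frakm_\B$ by its double coset (the sign ambiguity $t\mapsto-t$, $u\mapsto-u$ that appeared in Proposition~\ref{prop:Sp(1,1)=KB} leaves $X$ unchanged). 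Once $X_1=X_2=X$, the equation reduces to $\diag(u_1u_2^{-1},1)=\exp(X)v_2v_1^{-1}\exp(-X)$; comparing entries of this identity (the off-diagonal entries of the right side must vanish) pins down $v_1=v_2$ and hence $u_1=u_2$, using that $u\mapsto\diag(u,1)$ and $v\mapsto vI_2$ are injective.

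Finally, for smoothness of $\pi^{-1}$ I would write it explicitly as in Proposition~\ref{prop:Sp(1,1)=KB}: given $A$, the point $a\in\B$ is a smooth function of $A$ (via the explicit formula $\big(\cF_{A^{-1}}\big)^{-1}(0)$ from Theorem~\ref{thm:SliceRegularBasQuotient}, or via the matrix $M(a)$ appearing in the proof of Proposition~\ref{prop:SliceRiemBCentralizers}), then $X=f^{-1}(a)$ is smooth since $f$ is a diffeomorphism, and then $u$ and $v$ are obtained by solving $\diag(u,1)\exp(X)v=A$, which after multiplying by $\exp(-X)$ on the right becomes a smooth extraction of the two $\Spe(1)$-components of $A\exp(-X)$ — an explicitly invertible, hence smooth, operation. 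The main obstacle I anticipate is the injectivity step: making precise that the map $\frakm_\B\to(\Spe(1)\times\{1\})\backslash\Spe(1,1)/\Spe(1)I_2$ induced by $X\mapsto[\exp(X)]$ is injective, which requires carefully untangling the simultaneous left and right quotients rather than the single-sided quotient handled in Proposition~\ref{prop:Sp(1,1)=KB}. Everything else is a direct adaptation of arguments already present in the paper.
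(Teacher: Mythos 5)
Your proposal is correct and follows essentially the same route as the paper: smoothness is immediate, surjectivity and injectivity are reduced to the double coset diffeomorphism of Theorem~\ref{thm:SliceRegularBasQuotient} together with the explicit computation that $\exp(\frakm_\B)$ consists of the matrices $M(a)$ and that $X \mapsto \tanh(|q|)\sgn(q)$ is a diffeomorphism $\frakm_\B \to \B$, and the inverse is written down explicitly to check smoothness. The one obstacle you flag (injectivity of $X \mapsto [\exp(X)]$ into the double coset space) is resolved exactly by the tool you already cite: that map composed with the bijection of Theorem~\ref{thm:SliceRegularBasQuotient} is the injective map $X \mapsto \tanh(|q|)\sgn(q)$, so no further untangling of the two-sided quotient is needed, and this is precisely how the paper argues.
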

\begin{proof}
	We note that $\pi$ is a well defined smooth map.
	
	A simple substitution in the second formula from Proposition~\ref{prop:expSp(1,1)} shows that for every $q \in \HH$ we have
	\[
		\exp
			\bigg(
				\begin{pmatrix}
					0 & \overline{q} \\
					q & 0
				\end{pmatrix}
			\bigg) =
				\frac{1}{\sqrt{1 - \tanh(|q|)^2}}
					\begin{pmatrix}
						1 & \tanh(|q|) \overline{\sgn(q)} \\
						\tanh(|q|) \sgn(q) & 1
					\end{pmatrix},
	\]
	where $\sgn(q) = q/|q|$ for $q \not= 0$ and $\sgn(0) = 0$. In particular, with the notation from the proof of Proposition~\ref{prop:SliceRiemBCentralizers}, it is clear that $\exp(\frakm_\B)$ is the set of matrices $M(a)$ with $a \in \B$. The previous formula also shows that $\exp$ restricted to $\frakm_\B$ is injective and it satisfies $\exp(X)^{-1} = \exp(-X)$, for every $X \in \frakm_\B$.
	
	Recall that Theorem~\ref{thm:SliceRegularBasQuotient} yields the diffeomorphism from the double coset space $(\Spe(1) \times \{1\}) \backslash \Spe(1,1) / \Spe(1) I_2$ onto $\B$ given by
	\[
		[A] \mapsto \big(\cF_{A^{-1}}\big)^{-1}(0),
	\]
	where $[A]$ will denote in this proof the double coset class of the matrix $A$. As noted in the proof of Proposition~\ref{prop:SliceRiemBCentralizers}, and following its notation, the inverse of this diffeomorphism is given by $a \mapsto [M(-a)]$. It follows from these observations that for every $A \in \Spe(1,1)$ there exist $u,v \in \Spe(1)$ and $a \in \B$ such that
	\[
		A = \diag(u,1) M(a) v,
	\]
	thus showing that $\pi$ is surjective.

	To prove the injectivity of $\pi$, let us consider $A \in \Spe(1,1)$ for which there exist expressions
	\[
		A = \diag(u_1,1) \exp(X_1) v_1 = \diag(u_2,1) \exp(X_2) v_2,
	\]
	for some $u_1, v_1, u_2, v_2 \in \Spe(1)$ and $X_1, X_2 \in \frakm_\B$. By the remarks above and the properties of regular M\"obius transformations we conclude that
	\[
		\big(\cF_{\exp(-X_1) \diag(\overline{u}_1,1)}\big)^{-1}(0)
			= \big(\cF_{A^{-1}}\big)^{-1}(0)
			= \big(\cF_{\exp(-X_2) \diag(\overline{u}_2,1)}\big)^{-1}(0).
	\]
	On other hand, if we have
	\[
		X_j = 
			\begin{pmatrix}
				0 & \overline{q}_j \\
				q_j & 0
			\end{pmatrix}
	\]
	where $q_j \in \HH$ ($j = 1, 2$), then we obtain
	\begin{multline*}
		\exp(-X_j) \diag(\overline{u}_j,1) = \\
			= \frac{1}{\sqrt{1 - \tanh(|q_j|)^2}}
				\begin{pmatrix}
					\overline{u}_j & -\tanh(|q_j|) \overline{\sgn(q_j)} \\
					-\tanh(|q_j|) \sgn(q_j) \overline{u}_j & 1
				\end{pmatrix},	
	\end{multline*}
	for $j =1, 2$. This matrix determines a regular M\"obius transformation that vanishes precisely at $\tanh(|q_j|) \sgn(q_j)$. Hence, the expression above for $\big(\cF_{A^{-1}}\big)^{-1}(0)$ implies that $q_1 = q_2$ and so that $X_1 = X_2$. Once this has been established we arrive at 
	\[
		\diag(u_1,1) \exp(X_1) v_1 = \diag(u_2,1) \exp(X_1) v_2,
	\]
	from which an expansion of the products yields $u_1 = u_2$ and $v_1 = v_2$. This proves the injectivity of $\pi$.
	
	To finish the proof, we will compute $\pi^{-1}$ and verify its smoothness. Let us consider the map $f : \frakm_\B \rightarrow \B$ given by
	\[
		f(X) = \big(\cF_{\exp(X)^{-1}}\big)^{-1}(0).
	\]
	In other words, $f$ is the composition of the assignments
	\[
		\frakm_\B \xrightarrow{\exp} \Spe(1,1) 
			\rightarrow (\Spe(1) \times \{1\}) \backslash \Spe(1,1) / \Spe(1) I_2
			\rightarrow \B,
	\]
	where the second arrow is the natural smooth double coset quotient map and the third arrow is the diffeomorphism from Theorem~\ref{thm:SliceRegularBasQuotient}. In particular, $f$ is smooth and we will show that it is a diffeomorphism. The arguments used above show that $f$ is injective. Since $\exp(\frakm_\B)$ consists of the matrices $M(a)$, with $a \in \B$, it is clear that $f$ is surjective. 
	
	On the other hand, $\frakm_\B$ and $\B$ both have dimension $4$, and so it is enough to prove that $f$ has injective differential at every point. We observe that the first formula in this proof implies that for every $t \in \R$ and $u \in \Spe(1)$ we have
	\[
		f\bigg( t
		\begin{pmatrix}
			0 & \overline{u} \\
			u & 0
		\end{pmatrix}
		\bigg) = \tanh(t) u,
	\]
	and taking derivative with respect to $t$ this shows that
	\[
		\dif f_0\bigg(
		\begin{pmatrix}
			0 & \overline{q} \\
			q & 0
		\end{pmatrix}
		\bigg) = q,
	\]
	for every $q \in \HH$. This proves that $\dif f_0$ is injective. More generally, also from the first formula in this proof we obtain
	\[
		f(q) = \tanh(|q|) \frac{q}{|q|},
	\]
	for every $q \in \HH \setminus \{0\}$. Using polar coordinates in $\HH$, we conclude that $f$ has injective differential at every point in $\HH \setminus \{0\}$. This completes the proof that $f$ is a diffeomorphism.
	
	Let us now consider $A \in \Spe(1,1)$ and write
	\[
		A = \diag(u,1) \exp(X) v,
	\]
	where $u,v \in \Spe(1)$ and $X \in \frakm_\B$. Hence, we have
	\[
		f(X) = \big(\cF_{\exp(X)^{-1}}\big)^{-1}(0) = \big(\cF_{A^{-1}}\big)^{-1}(0)
	\]
	where the second identity is a consequence of $[A] = [\exp(X)]$. It follows that
	\[
		X = f^{-1}\Big(\big(\cF_{A^{-1}}\big)^{-1}(0)\Big) = F(A),
	\]
	which defines a smooth function $F$ of $A \in \Spe(1,1)$. As noted before, we can write $\exp(X) = M(-a)$ for some $a \in \B$. Hence, we obtain
	\[
		A = \frac{1}{\sqrt{1 - |a|^2}}
			\begin{pmatrix}
				uv & u\overline{a}v \\
				av & v
			\end{pmatrix},
	\]
	from which it follows that $u = a_{11} a_{22}^{-1}$.

	Collecting the previous computations we arrive to the following expression for the inverse of $\pi$
	\[
		\pi^{-1}(A)  
			= (a_{11} a_{22}^{-1}, F(A),
			 \exp(-F(A)) \diag(\overline{u},1) A),
	\]
	where in the last entry we use the identification $\Spe(1) I_2 \simeq \Spe(1)$. This shows the smoothness of $\pi^{-1}$ and completes the proof.
\end{proof}

\subsection{Riemannian symmetric vs slice Riemannian on $\B$}
\label{subsec:SymmVSSlice}
We close this work by collecting and comparing the information relating both classical and slice regular approaches to the geometry of $\B$ considered in this work. All the claims below follow from the results proved so far.

As we have emphasized, most of the geometry of $\B$ is obtained from the Lie group $\Spe(1,1)$. This was our starting point when we considered the M\"obius transformations, both classical and regular, which lead us to the maps
\begin{align*}
	\Spe(1,1) &\rightarrow \Mbb(\B)
		& \Spe(1,1) &\rightarrow \cM(\B) \\
	A &\mapsto F_A & A &\mapsto \cF_A.
\end{align*}
The first one yields an anti-homomorphism, while the second one is far from admitting a group-like structure. Nevertheless, both lead to quotient representations of $\B$ given by
\begin{equation}\label{eq:ComparisonQuotients}
	(\Spe(1) \times \Spe(1)) \backslash \Spe(1,1) \simeq \B \simeq
	(\Spe(1) \times \{1\}) \backslash \Spe(1,1) / \Spe(1) I_2,
\end{equation}
corresponding to the classical and regular M\"obius transformations, respectively. With this regard, it is interesting to note that
\[
	(\Spe(1) \times \{1\}) \Spe(1) I_2 = \Spe(1) \times \Spe(1)
\]
as subgroups of $\Spe(1,1)$. Hence, the same subgroup is being used to realize $\B$ as a quotient of $\Spe(1,1)$. However, such subgroup acts on one side only, in the classical case, and its action makes use of both sides, in the regular case. It is important to note that in the complex case we have
\[
	(\SU(1) \times \SU(1)) \backslash \SU(1,1) =
	(\SU(1) \times \{1\}) \backslash \SU(1,1) / \SU(1) I_2,
\]
by the commutativity of $\C$, thus ruling out the existence of a similar pair of realizations for the complex unit disk. As expected, the non-commutativity of $\HH$ plays a fundamental role.

On the other hand, the Riemannian metrics on $\B$ considered in this work are 
\[
	\widehat{g}_q(\alpha,\beta) = 
	\frac{\re(\alpha \overline{\beta})}{(1-|q|^2)^2}, \quad
	g_q(\alpha,\beta) = 
		\frac{\re((\alpha - \alpha q\alpha) 
			(\overline{\beta - \beta q \beta})}
				{|1-q^2|^2 (1-|q|^2)^2},
\]
which we have called Riemannian symmetric and slice Riemannian, respectively. The non-commutativity of $\HH$ is again the reason for these two metrics to differ. Nevertheless, both restrict to the same (canonical) complex hyperbolic metric on each complex slice.

An important feature of slice regular functions is that the real axis $\R$ plays a fundamental central role. This fact seems to be the cause for the behavior of the slice Riemannian geometry in comparison with the Riemannian symmetric one, as we now explain.

The connected component of the isometry group of $(\B,\widehat{g})$ (Riemannian symmetric geometry) is isomorphic to the group
\[
	\Spe(1,1)/\{\pm I_2\},
\]
which is also known to be the subgroup of orientation preserving isometries. On the other hand, the isometry group of $(\B,g)$ (slice Riemannian geometry) is isomorphic to the group
\[
	\SO(3) \times (\Z_2 \ltimes \SO_0(1,1)) \times \Z_2,
\]
and the orientation preserving isometries for $(\B,g)$ is isomorphic to the subgroup
\[
	\SO(3) \times (\Z_2 \ltimes \SO_0(1,1)).
\]
Here we observe a property shared by the two metrics. For both $\widehat{g}$ and $g$, the group of orientation preserving isometries is obtained from the left and right translations of $\Spe(1,1)$ that commute with the subgroup actions used to build the quotient realizations of $\B$ given by \eqref{eq:ComparisonQuotients}. 

The isometry group of $\widehat{g}$ is $10$-dimensional and acts transitively on $\B$. In contrast, the isometry group of $g$ is $4$-dimensional and the orbits are easily seen to be given by the following sets
\[
	(-1,1) = \R\cap \B, \quad
		\mathcal{O}_y 
			= \{ (1 + \tanh(t)yI)^{-1} (yI + \tanh(t)) 
					: t \in \R, \; I \in \Sbb \}.
\]
where $y \in (0,1)$. The latter are $3$-dimensional and the former is $1$-dimensional. Hence, the slice Riemannian geometry clearly distinguish the special role of the real axis.

An important feature of a Riemannian symmetric space of non-compact type is the diffeomorphism that holds between its isometry group and the product of  the space and the isotropy group. For the case of $(\B,\widehat{g})$ this yields the diffeomorphism
\begin{align*}
	\widehat{\pi} : \Spe(1) \times \Spe(1) \times \frakm_\B 
		&\longrightarrow \Spe(1,1) \\
		(u,v,X) &\longmapsto \diag(u,v) \exp(X).
\end{align*}
For the slice Riemannian geometry of $(\B,g)$ one cannot expect the same exact behavior. The reason is that the isometry group is smaller in this case. However, we have obtained the diffeomorphism
\begin{align*}
	\pi : \Spe(1) \times \frakm_\B \times \Spe(1) &\longrightarrow 		
		\Spe(1,1) \\
		(u,X,v) &\longmapsto \diag(u,1) \exp(X) v,
\end{align*}
as a consequence of the relation between $g$ and the regular M\"obius transformations. On the other hand, these diffeomorphic realizations of $\Spe(1,1)$ exhibit a strong parallelism; this can be easily verified through a close inspection of the results showing that $\widehat{\pi}$ and $\pi$ are diffeomorphisms.

\subsection*{Acknowledgment}
This research was partially supported by SNII-Secihti and by Conahcyt Grants 280732 and 61517.


\begin{thebibliography}{XX}

\bibitem{ArcozziSarfatti} Arcozzi, Nicola and Sarfatti, Giulia:
\emph{Invariant metrics for the quaternionic Hardy space}. J. Geom. Anal. 25 (2015), no.3, 2028--2059.

\bibitem{BisiGentiliMobius} Bisi, Cinzia and Gentili, Graziano: \emph{M\"obius transformations and the Poincar\'e distance in the quaternionic setting}. Indiana Univ. Math. J. 58 (2009), no.6, 2729--2764.

\bibitem{BisiStoppatoMobius} Bisi, Cinzia and Stoppato, Caterina: \emph{Regular vs.~classical M\"obius transformations of the quaternionic unit ball}. Advances in hypercomplex analysis, 1–13.
Springer INdAM Ser., 1 Springer, Milan, 2013.

\bibitem{ColomboSabadiniStruppaFunctionalBook} Colombo, Fabrizio; Sabadini, Irene and Struppa, Daniele C.: Noncommutative functional calculus. Theory and applications of slice hyperholomorphic functions. Progr. Math., 289 Birkh\"auser/Springer Basel AG, Basel, 2011.

\bibitem{GentiliStoppatoStruppa2ndEd} Gentili, Graziano; Stoppato, Caterina and Struppa, Daniele C.: Regular functions of a quaternionic variable. Second edition. Springer Monogr. Math. Springer, Cham, 2022.

\bibitem{Helgason} Helgason, Sigurdur: Differential geometry, Lie groups, and symmetric spaces. Corrected reprint of the 1978 original. Grad. Stud. Math., 34 American Mathematical Society, Providence, RI, 2001.

\bibitem{KobayashiTransformations} Kobayashi, Shoshichi: Transformation groups in differential geometry. Reprint of the 1972 edition. Classics Math. Springer-Verlag, Berlin, 1995.

\bibitem{QB-SliceKahler} Quiroga-Barranco, Raul: \emph{Geometric structures on the quaternionic unit ball and slice regular M\"obius transformations}. Adv. Appl. Clifford Algebras 34 (2024), article 41. 

\bibitem{QB-cM(B)-asmanifold} Quiroga-Barranco, Raul: \emph{Geometry of the slice regular M\"obius transformations of the quaternionic unit ball}, preprint, https://arxiv.org/abs/2409.09897

\bibitem{StoppatoMobius} Stoppato, Caterina: \emph{Regular Moebius transformations of the space of quaternions}. Ann. Global Anal. Geom. 39 (2011), no.4, 387--401.

\bibitem{WarnerBook} Warner, Frank W.: Foundations of differentiable manifolds and Lie groups. Corrected reprint of the 1971 edition. Grad. Texts in Math., 94 Springer-Verlag, New York-Berlin, 1983.

\end{thebibliography}
\end{document}